\documentclass[12pt,reqno]{amsart} 
\usepackage{amssymb,amscd,url}

\begin{document}

\allowdisplaybreaks

\hyphenation{ca-non-i-cal}



\newtheorem{theorem}{Theorem}
\newtheorem{lemma}[theorem]{Lemma}
\newtheorem{conjecture}[theorem]{Conjecture}
\newtheorem{question}[theorem]{Question}
\newtheorem{proposition}[theorem]{Proposition}
\newtheorem{corollary}[theorem]{Corollary}
\newtheorem*{claim}{Claim}

\theoremstyle{definition}
\newtheorem*{definition}{Definition}
\newtheorem{example}[theorem]{Example}
\newtheorem{remark}[theorem]{Remark}

\theoremstyle{remark}
\newtheorem*{acknowledgement}{Acknowledgements}



\newenvironment{notation}[0]{%
  \begin{list}%
    {}%
    {\setlength{\itemindent}{0pt}
     \setlength{\labelwidth}{4\parindent}
     \setlength{\labelsep}{\parindent}
     \setlength{\leftmargin}{5\parindent}
     \setlength{\itemsep}{0pt}
     }%
   }%
  {\end{list}}

\newenvironment{parts}[0]{%
  \begin{list}{}%
    {\setlength{\itemindent}{0pt}
     \setlength{\labelwidth}{1.5\parindent}
     \setlength{\labelsep}{.5\parindent}
     \setlength{\leftmargin}{2\parindent}
     \setlength{\itemsep}{0pt}
     }%
   }%
  {\end{list}}
\newcommand{\Part}[1]{\item[\upshape#1]}

\renewcommand{\a}{\alpha}
\renewcommand{\b}{\beta}
\newcommand{\g}{\gamma}
\renewcommand{\d}{\delta}
\newcommand{\e}{\epsilon}
\newcommand{\f}{\varphi}
\newcommand{\bfphi}{{\boldsymbol{\f}}}
\renewcommand{\l}{\lambda}
\renewcommand{\k}{\kappa}
\newcommand{\lhat}{\hat\lambda}
\newcommand{\m}{\mu}
\newcommand{\bfmu}{{\boldsymbol{\mu}}}
\renewcommand{\o}{\omega}
\renewcommand{\r}{\rho}
\newcommand{\rbar}{{\bar\rho}}
\newcommand{\s}{\sigma}
\newcommand{\sbar}{{\bar\sigma}}
\renewcommand{\t}{\tau}
\newcommand{\z}{\zeta}

\newcommand{\D}{\Delta}
\newcommand{\G}{\Gamma}
\newcommand{\F}{\Phi}

\newcommand{\ga}{{\mathfrak{a}}}
\newcommand{\gb}{{\mathfrak{b}}}
\newcommand{\gn}{{\mathfrak{n}}}
\newcommand{\gp}{{\mathfrak{p}}}
\newcommand{\gP}{{\mathfrak{P}}}
\newcommand{\gq}{{\mathfrak{q}}}

\newcommand{\Abar}{{\bar A}}
\newcommand{\Ebar}{{\bar E}}
\newcommand{\Kbar}{{\bar K}}
\newcommand{\Pbar}{{\bar P}}
\newcommand{\Sbar}{{\bar S}}
\newcommand{\Tbar}{{\bar T}}
\newcommand{\ybar}{{\bar y}}
\newcommand{\phibar}{{\bar\f}}

\newcommand{\Acal}{{\mathcal A}}
\newcommand{\Bcal}{{\mathcal B}}
\newcommand{\Ccal}{{\mathcal C}}
\newcommand{\Dcal}{{\mathcal D}}
\newcommand{\Ecal}{{\mathcal E}}
\newcommand{\Fcal}{{\mathcal F}}
\newcommand{\Gcal}{{\mathcal G}}
\newcommand{\Hcal}{{\mathcal H}}
\newcommand{\Ical}{{\mathcal I}}
\newcommand{\Jcal}{{\mathcal J}}
\newcommand{\Kcal}{{\mathcal K}}
\newcommand{\Lcal}{{\mathcal L}}
\newcommand{\Mcal}{{\mathcal M}}
\newcommand{\Ncal}{{\mathcal N}}
\newcommand{\Ocal}{{\mathcal O}}
\newcommand{\Pcal}{{\mathcal P}}
\newcommand{\Qcal}{{\mathcal Q}}
\newcommand{\Rcal}{{\mathcal R}}
\newcommand{\Scal}{{\mathcal S}}
\newcommand{\Tcal}{{\mathcal T}}
\newcommand{\Ucal}{{\mathcal U}}
\newcommand{\Vcal}{{\mathcal V}}
\newcommand{\Wcal}{{\mathcal W}}
\newcommand{\Xcal}{{\mathcal X}}
\newcommand{\Ycal}{{\mathcal Y}}
\newcommand{\Zcal}{{\mathcal Z}}

\renewcommand{\AA}{\mathbb{A}}
\newcommand{\BB}{\mathbb{B}}
\newcommand{\CC}{\mathbb{C}}
\newcommand{\FF}{\mathbb{F}}
\newcommand{\GG}{\mathbb{G}}
\newcommand{\NN}{\mathbb{N}}
\newcommand{\PP}{\mathbb{P}}
\newcommand{\QQ}{\mathbb{Q}}
\newcommand{\RR}{\mathbb{R}}
\newcommand{\ZZ}{\mathbb{Z}}

\newcommand{\bfa}{{\boldsymbol a}}
\newcommand{\bfb}{{\boldsymbol b}}
\newcommand{\bfc}{{\boldsymbol c}}
\newcommand{\bfe}{{\boldsymbol e}}
\newcommand{\bff}{{\boldsymbol f}}
\newcommand{\bfg}{{\boldsymbol g}}
\newcommand{\bfj}{{\boldsymbol j}}
\newcommand{\bfp}{{\boldsymbol p}}
\newcommand{\bfr}{{\boldsymbol r}}
\newcommand{\bfs}{{\boldsymbol s}}
\newcommand{\bft}{{\boldsymbol t}}
\newcommand{\bfu}{{\boldsymbol u}}
\newcommand{\bfv}{{\boldsymbol v}}
\newcommand{\bfw}{{\boldsymbol w}}
\newcommand{\bfx}{{\boldsymbol x}}
\newcommand{\bfy}{{\boldsymbol y}}
\newcommand{\bfz}{{\boldsymbol z}}
\newcommand{\bfA}{{\boldsymbol A}}
\newcommand{\bfF}{{\boldsymbol F}}
\newcommand{\bfB}{{\boldsymbol B}}
\newcommand{\bfD}{{\boldsymbol D}}
\newcommand{\bfG}{{\boldsymbol G}}
\newcommand{\bfI}{{\boldsymbol I}}
\newcommand{\bfM}{{\boldsymbol M}}
\newcommand{\bfT}{{\boldsymbol T}}
\newcommand{\bfzero}{{\boldsymbol{0}}}

\newcommand{\Aut}{\operatorname{Aut}}
\newcommand{\CM}{\operatorname{CM}}   
\newcommand{\Disc}{\operatorname{Disc}}
\newcommand{\Div}{\operatorname{Div}}
\newcommand{\Ell}{\operatorname{Ell}}   
\newcommand{\End}{\operatorname{End}}
\newcommand{\Fix}{\operatorname{Fix}}
\newcommand{\Fbar}{{\bar{F}}}
\newcommand{\Gal}{\operatorname{Gal}}
\newcommand{\GL}{\operatorname{GL}}
\newcommand{\Hom}{\operatorname{Hom}}
\newcommand{\Index}{\operatorname{Index}}
\newcommand{\Image}{\operatorname{Image}}
\newcommand{\liftable}{{\textup{liftable}}}
\newcommand{\hhat}{{\hat h}}
\newcommand{\Ker}{{\operatorname{ker}}}
\newcommand{\Lift}{\operatorname{Lift}}
\newcommand{\MOD}[1]{~(\textup{mod}~#1)}
\newcommand{\Norm}{{\operatorname{\mathsf{N}}}}
\newcommand{\notdivide}{\nmid}
\newcommand{\nondeg}{{\textup{nondeg}}}
\newcommand{\mnondeg}{{\textup{$m$-nondeg}}}
\newcommand{\normalsubgroup}{\triangleleft}
\newcommand{\odd}{{\operatorname{odd}}}
\newcommand{\onto}{\twoheadrightarrow}
\newcommand{\ord}{\operatorname{ord}}
\newcommand{\Per}{\operatorname{Per}}
\newcommand{\PrePer}{\operatorname{PrePer}}
\newcommand{\PGL}{\operatorname{PGL}}
\newcommand{\Pic}{\operatorname{Pic}}
\newcommand{\PolCoeff}[1]{{\boldsymbol{[\![}}#1{\boldsymbol{]\!]}}}
\newcommand{\Prob}{\operatorname{Prob}}
\newcommand{\proj}{\operatorname{proj}}
\newcommand{\Qbar}{{\bar{\QQ}}}
\newcommand{\rank}{\operatorname{rank}}
\newcommand{\Rat}{\operatorname{Rat}}
\newcommand{\Ratbar}{\overline{\operatorname{Rat}}}
\newcommand{\Resultant}{\operatorname{Res}}
\renewcommand{\setminus}{\smallsetminus}
\newcommand{\Span}{\operatorname{Span}}
\newcommand{\tors}{{\textup{tors}}}
\newcommand{\Trace}{\operatorname{Trace}}
\newcommand{\Uhat}{{\hat U}}
\newcommand{\UHP}{{\mathfrak{h}}}    
\newcommand{\wt}{\operatorname{wt}}
\newcommand{\<}{\langle\!\langle}
\renewcommand{\>}{\rangle\!\rangle}

\newcommand{\longhookrightarrow}{\lhook\joinrel\longrightarrow}
\newcommand{\longonto}{\relbar\joinrel\twoheadrightarrow}

\newcommand{\Spec}{\operatorname{Spec}}
\newcommand{\gm}{{\mathfrak{m}}}
\newcommand{\gA}{{\mathfrak{A}}}
\newcommand{\gF}{{\mathfrak{F}}}
\newcommand{\gG}{{\mathfrak{G}}}
\newcommand{\gC}{{\mathfrak{C}}}
\newcommand{\gR}{{\mathfrak{R}}}
\newcommand{\gS}{{\mathfrak{S}}}
\newcommand{\zero}{{\operatorname{div}}}
\renewcommand{\div}{{\operatorname{div}}}
\newcommand{\Aff}{{\mathbb{A}}}

\title[Landen transforms as families of maps]
{Landen transforms as families of (commuting)
rational self-maps of projective space}
\date{24 August, 2013}

\author[Joyce, Kawaguchi, Silverman]
  {Michael Joyce, Shu Kawaguchi, and Joseph H. Silverman}
\email{\url{mjoyce3@tulane.edu, kawaguch@math.kyoto-u.ac.jp,
    jhs@math.brown.edu}}
\address{Department of Mathematics, Tulane University, New Orleans, 
         LA 70118 USA}
\address{Department of Mathematics, Faculty of Science, Kyoto University, 
         Kyoto, 606-8502, Japan}
\address{Mathematics Department, Box 1917
         Brown University, Providence, RI 02912 USA}
\subjclass[2010]{Primary: 14E05; Secondary:  37P05}
\keywords{Landen transform, commuting rational maps, algebraic
  dynamical systems}

\thanks{Kawaguchi's research supported by KAKENHI 24740015.
Silverman's research supported by NSA H98230-04-1-0064,
NSF DMS-0854755, and Simons Collaboration Grant \#241309}

\begin{abstract}
The classical $(m,k)$-Landen transform~$\gF_{m,k}$ is a self-map of
the field of rational function~$\CC(z)$ obtained by forming a weighted
average of a rational function over twists by $m$'th roots of
unity. Identifying the set of rational maps of degree~$d$ with an
affine open subset of~$\PP^{2d+1}$, we prove that~$\gF_{m,0}$ induces
a dominant rational self-map~$\gR_{d,m,0}$ of~$\PP^{2d+1}$ of
algebraic degree~$m$, and for~$1\le k<m$, the transform~$\gF_{m,k}$
induces a dominant rational self-map~$\gR_{d,m,k}$ of algebraic
degree~$m$ of a certain hyperplane in~$\PP^{2d+1}$. We show in all
cases that~$\gR_{d,m,k}$ extends nicely  to~$\PP^{2d+1}_\ZZ$,
and that~$\{\gR_{d,m,0} : m\ge0\}$ is a commuting family of maps.
\end{abstract}


\maketitle

\tableofcontents

\section{Introduction}
The Landen transform, also known as Gauss' arithmetic-geometric mean,
is a self-map of the space of rational functions in one variable.  The
purpose of this note is to study the generalized Landen transform from
the viewpoint of arithmetic geometry and arithmetic dynamics.  We
defer until Section~\ref{section:history} a discussion of the history
and historical applications of the Landen transform, and devote
this introduction to describing our main results. 

The following proposition characterizes the generalized Landen
transform.

\begin{proposition}
\label{proposition:Gphicharac}
Let $m\ge1$ and $0\le k<m$ be integers, let~$K$ be a field in which
$m\ne0$, and let~$\z_m$ be a primitive $m$'th root of unity in an
extension field of~$K$. Then for each rational function~$\f(z)\in
K(z)$ there is a unique rational function $\gF_{m,k}(\f)(z)\in K(z)$,
called the \emph{$(m,k)$-Landen transform of~$\f$}, characterized by
the formula
\begin{equation}
  \label{eqn:Fmkfwmdef}
  \gF_{m,k}(\f)(w^m) =  \frac{1}{mw^k}
     \sum_{t=0}^{m-1} \z_m^{-kt}\f(\z_m^t w).
\end{equation}
\end{proposition}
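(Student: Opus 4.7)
The strategy is to first establish uniqueness trivially, and then build $\gF_{m,k}(\f)$ by showing that the rational function of $w$ on the right-hand side of~\eqref{eqn:Fmkfwmdef} descends to a rational function of $w^m$ with coefficients in $K$. Write
\begin{equation*}
  G(w) := \frac{1}{mw^k}\sum_{t=0}^{m-1} \z_m^{-kt}\f(\z_m^t w) \in K(\z_m)(w),
\end{equation*}
which makes sense because $m\ne0$ in $K$. Uniqueness is immediate: the $K$-algebra homomorphism $K(z)\to K(w)$ sending $z\mapsto w^m$ is injective because $w^m$ is transcendental over $K$, so at most one $\gF_{m,k}(\f)\in K(z)$ can satisfy \eqref{eqn:Fmkfwmdef}.

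For existence, the plan is to show (a) $G(w)\in K(\z_m)(w^m)$, i.e.\ that $G$ is fixed by the substitution $w\mapsto\z_m w$, and (b) $G(w)\in K(w)$, i.e.\ that $G$ is fixed by $\Gal(K(\z_m)/K)$. Step~(a) is a direct reindexing: the extra factor $\z_m^{-k}$ from $(\z_m w)^{-k}$ combines with the shift $t\mapsto t+1$ in the sum to restore the original expression. Step~(b) is another reindexing: for $\s\in\Gal(K(\z_m)/K)$ with $\s(\z_m)=\z_m^a$, $\gcd(a,m)=1$, applying $\s$ to the coefficients of $G$ and then substituting $u\equiv at\pmod m$ again recovers $G$, because the coefficients of $\f$ lie in $K$.

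Granting (a) and (b), $G$ lies in $K(\z_m)(w)\cap K(w^m)$. To conclude $G\in K(w^m)$ itself, I combine the standard Galois-theoretic fact that the fixed field of the cyclic action $w\mapsto\z_m w$ on $K(\z_m)(w)$ is $K(\z_m)(w^m)$ with a descent argument: writing $G(w)=\tilde H(w^m)$ with $\tilde H\in K(\z_m)(z)$ (from step~(a)), applying $\s\in\Gal(K(\z_m)/K)$ to the identity and using the injectivity from the uniqueness argument gives $\tilde H^\s=\tilde H$, so $\tilde H\in K(z)$. Setting $\gF_{m,k}(\f):=\tilde H$ completes the proof.

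The main obstacle, if any, is the clean bookkeeping in step~(a)/(b) and making sure the Galois descent in the last paragraph is clearly justified in the case $\z_m\notin K$, since a priori the formula for $G$ involves elements of $K(\z_m)$. I expect no substantive difficulty beyond checking these reindexings carefully.
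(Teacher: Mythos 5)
Your proof is correct and follows essentially the same route as the paper's (Proposition~\ref{proposition:FphiinKwm}): establish invariance of the defining expression under the Kummer substitution $w\mapsto\z_m w$ to descend from $w$ to $w^m$, then establish invariance under $\Gal(K(\z_m)/K)$ (which merely permutes the summands) to descend the coefficients from $K(\z_m)$ to $K$. The only slip is the line ``$G$ lies in $K(\z_m)(w)\cap K(w^m)$,'' which should read $K(\z_m)(w^m)\cap K(w)$ in light of your own steps (a) and (b); the subsequent descent argument makes clear that this is what you intend, and it correctly produces $\tilde H\in K(z)$.
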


\noindent
As a warm-up for our main result, we give the elementary proof of
Proposition~\ref{proposition:Gphicharac} in
Section~\ref{section:history}; see
Proposition~\ref{proposition:FphiinKwm}.

We denote the space of rational functions of degree~$d$ by~$\Rat_d$,
and we identify~$\Rat_d$ with an affine open subset of~$\PP^{2d+1}$ by
assigning the degree~$d$ rational function
\[
  \f_{\bfa,\bfb}(z) :=
  \frac{a_0z^d + a_1z^{d-1} + a_2z^{d-2} + \cdots + a_d}
       {b_0z^d + b_1z^{d-1} + b_2z^{d-2} + \cdots + b_d}
\]
to the point
\[
  [\bfa,\bfb] :=
  [a_0,a_1,\ldots,a_d,b_0,b_1,\ldots,b_d] \in \PP^{2d+1}.
\]
In this way~$\Rat_d$ is an affine scheme over~$\ZZ$, and for any
field~$K$, we may view~$K(z)$ as a disjoint union
\begin{equation}
  \label{eqn:KzcupRatdK}
  K(z) =  \bigcup_{d=0}^\infty \Rat_d(K) \subset \bigcup_{d=0}^\infty \PP^{2d+1}(K).
\end{equation}

However, we note that in general, the degree of $\gF_{m,k}(\f)(z)$ may
be strictly smaller than the degree of~$\f(z)$, so the Landen
transform $\gF_{m,k}:K(z)\to K(z)$ does not respect the disjoint union
decomposition~\eqref{eqn:KzcupRatdK}. For example, if~$\f(z)$ is a
polynomial, then $\deg\gF_{m,k}(\f)(z)\le\frac{1}{m}\deg\f(z)$;
see Section~\ref{section:Fmkasrationalmap}. Our main result describes the
rational self-maps of~$\PP^{2d+1}_\ZZ$ induced by the action
of~$\gF_{m,k}$ on a Zariski open subset of~$\Rat_d$.

\begin{theorem}
\label{theorem:RdmkFabmk}
Let $m\ge1$ and $0\le k<m$ be integers.
\begin{parts}
\Part{(a)}
For each $d\ge1$ there is a unique rational map
\[
  \gR_{d,m,k} : \PP^{2d+1}_\ZZ \dashrightarrow \PP^{2d+1}_\ZZ
\]
with the property that for all fields~$K$ in which $m\ne0$ and for all
degree~$d$ rational functions~$\f_{\bfa,\bfb}(z)\in\Rat_d(K)\subset
K(z)$ whose $(m,k)$-Landen transform satisfies
\[
  \deg_z \gF_{m,k}(\f_{\bfa,\bfb})(z) = d,
\]
we have
\[
  \gF_{m,k}(\f_{\bfa,\bfb})(z) = \gR_{d,m,k}\bigl([\bfa,\bfb]\bigr).
\]
\Part{(b)}
The indeterminacy locus of~$\gR_{d,m,k}$  is the linear subspace
\[
  \Zcal(\gR_{d,m,k})
  = \bigl\{[\bfa,\bfb]\in\PP^{2d+1}_\ZZ : \bfb=\bfzero \bigr\}
  \cong \PP^d,
\]
and the rational map~$\gR_{d,m,k}$ induces a \emph{morphism}
\[
  \gR_{d,m,k} : \PP^{2d+1}_\ZZ \setminus \Zcal(\gR_{d,m,k})
   \longrightarrow \PP^{2d+1}_\ZZ \setminus \Zcal(\gR_{d,m,k}).
\]
\Part{(c)}
The map $\gR_{d,m,0}:\PP^{2d+1}_\ZZ\dashrightarrow\PP^{2d+1}_\ZZ$ is a
dominant rational map of algebraic degree~$m$.
\Part{(d)}
For $1\le k<m$, the image of the rational map $\gR_{d,m,k}$ is the
hyperplane
\begin{equation}
  \label{eqn:hyperplanea00}
  \bigl\{[\bfa,\bfb]\in\PP^{2d+1}_\ZZ : a_0=0 \bigr\}
  \cong   \PP^{2d}_\ZZ.
\end{equation}
For all $0\le k<m$, the map~$\gR_{d,m,k}$ induces a dominant rational
map of algebraic degree~$m$ from the
hyperplane~\eqref{eqn:hyperplanea00} to itself.
\end{parts}
\end{theorem}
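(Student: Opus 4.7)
The plan is to work directly from formula~\eqref{eqn:Fmkfwmdef}. Writing $\f_{\bfa,\bfb}(z)=N(z)/D(z)$ with $N,D$ of degree $d$ in $z$, I would put the right-hand side over the common denominator $P(w):=\prod_{t=0}^{m-1}D(\z_m^t w)$, obtaining $Q(w)/(mw^k P(w))$ with
\[
  Q(w):=\sum_{t=0}^{m-1}\z_m^{-kt}\,N(\z_m^t w)\prod_{s\ne t}D(\z_m^s w).
\]
A direct substitution gives $P(\z_m w)=P(w)$ and $Q(\z_m w)=\z_m^k Q(w)$, so there exist polynomials $\widetilde P,\widetilde Q$ in one variable with $P(w)=\widetilde P(w^m)$ and $Q(w)=w^k\widetilde Q(w^m)$, and setting $z=w^m$ produces the explicit identity $\gF_{m,k}(\f_{\bfa,\bfb})(z)=\widetilde Q(z)/(m\widetilde P(z))$. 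Since the reindexing $t\mapsto t\ell\bmod m$ (for $\gcd(\ell,m)=1$) is a bijection that permutes summands consistently, $P$ and $Q$ are $\Gal(\QQ(\z_m)/\QQ)$-invariant, and hence the coefficients of $\widetilde P$ and $\widetilde Q$ lie in $\ZZ[\bfa,\bfb]$. Reading them off defines the candidate $\gR_{d,m,k}:[\bfa,\bfb]\mapsto[\text{coeffs of }\widetilde Q,\;m\cdot\text{coeffs of }\widetilde P]$ over $\ZZ$. Part~(a) is then immediate: $\Rat_d$ is Zariski open and dense in $\PP^{2d+1}$, and rigidity of rational maps gives uniqueness.

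For part~(b), the coefficients of $\widetilde P$ depend only on $\bfb$ and are homogeneous of degree $m$; moreover $\widetilde P\equiv 0$ iff $P\equiv 0$ iff $D\equiv 0$ iff $\bfb=\bfzero$. Off $\{\bfb=\bfzero\}$ some $\widetilde P$-coordinate is nonzero, so $\gR_{d,m,k}$ is defined there, and the image has nonzero $\widetilde P$-part, hence lies outside $\Zcal$; this promotes $\gR_{d,m,k}$ to a morphism on $\PP^{2d+1}_\ZZ\setminus\Zcal$. Conversely, for $m\ge 2$ every term of both $P$ and $Q$ contains a factor $D(\z_m^s w)$, so all components of $\gR_{d,m,k}$ vanish on $\{\bfb=\bfzero\}$, identifying the indeterminacy locus exactly.

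For part~(c), each coefficient of $\widetilde P$ is homogeneous of degree $m$ in $\bfb$ alone, and each coefficient of $\widetilde Q$ is of bidegree $(1,m-1)$ in $(\bfa,\bfb)$, so every component of $\gR_{d,m,k}$ is homogeneous of total degree $m$. Any common factor must lie in $\ZZ[\bfb]$ (since it divides every $\widetilde P$-component) yet also divide each linear-in-$\bfa$ coefficient of $\widetilde Q$, which inspection rules out, pinning the algebraic degree at $m$. For dominance I would argue via partial fractions: a residue calculation yields
\[
  \sum_{s=0}^{m-1}\frac{\z_m^{(k+1)s}}{w-\z_m^s\alpha}
    =\frac{m\alpha^{m-k-1}w^k}{w^m-\alpha^m},
\]
so $\gF_{m,k}(c/(z-\alpha))(z)=c\alpha^{m-k-1}/(z-\alpha^m)$, and by linearity $\gF_{m,k}(\sum_{j=1}^d c_j/(z-\alpha_j))=\sum_j c_j\alpha_j^{m-k-1}/(z-\alpha_j^m)$ on the Zariski open locus where the $\alpha_j$ are simple, non-colliding poles with distinct $m$th powers. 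A generic target $\psi=\sum_j c'_j/(z-\beta_j)$ is hit by choosing any $m$th root $\alpha_j$ of $\beta_j$ and setting $c_j:=c'_j\alpha_j^{k+1-m}$, yielding the required generic surjectivity.

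Finally, for part~(d), a degree count in the construction of $\widetilde Q$ is decisive: when $k\ge 1$, the largest exponent $j\le md$ with $j\equiv k\pmod{m}$ is $md-m+k$, so $\widetilde Q(z)$ has degree at most $d-1$; this forces the $a_0$-component of $\gR_{d,m,k}([\bfa,\bfb])$ to vanish identically and places the image in the hyperplane~\eqref{eqn:hyperplanea00}. Restriction to that hyperplane (valid for all $0\le k<m$, since any $\sum c_j/(z-\alpha_j)$ automatically has $a_0=0$ and the above formulas preserve this form) yields a self-map, and the surjectivity argument just given, applied now to partial-fraction $\psi$ with $a_0=0$, establishes dominance onto the hyperplane while preserving the degree-$m$ count. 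The step I anticipate as the main obstacle is the no-common-factor verification needed to pin the algebraic degree at exactly $m$: it demands explicit control of the bihomogeneous polynomials that emerge from symmetric sums over $m$th roots of unity, and is the one place where careless bookkeeping could silently drop the degree below $m$.
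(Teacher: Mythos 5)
Your overall strategy for parts (a)--(b) tracks the paper's, and your partial-fraction idea for dominance is a genuinely different (and elegant) route, but there are two real gaps that matter for the theorem as stated over~$\ZZ$, and one case omission.

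The most serious gap is the integrality statement you need but never prove. You construct the candidate map from the tuple $[\text{coeffs of }\widetilde Q,\;m\cdot\text{coeffs of }\widetilde P]$. Galois descent does give $\widetilde Q\in\ZZ[\bfa,\bfb,z]$, but the crucial and nontrivial fact is stronger: $\widetilde Q\in m\,\ZZ[\bfa,\bfb,z]$, so that $\widetilde Q/m$ still has $\ZZ$-coefficients. This is exactly Proposition~\ref{prop:resultantGH}(a), which the paper proves by a separate Laurent-series computation. Without it, your tuple has a hidden common constant factor $m$, harmless over $\QQ$ but not a unit in $\ZZ$. In particular your argument for part~(b) --- ``off $\{\bfb=\bfzero\}$ some $\widetilde P$-coordinate is nonzero'' --- is silent on the special fibers over primes $p\mid m$, where $m\widetilde P\equiv 0$; one must first cancel the constant factor $m$ from the tuple, and that cancellation requires exactly the divisibility you skipped. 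The whole point of the theorem's $\PP^{2d+1}_\ZZ$ formulation, as the paper stresses in a remark, is that $\gR_{d,m,k}$ makes sense on every special fiber, including $p\mid m$, despite the $1/m$ in~\eqref{eqn:Fmkfwmdef}.

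The second gap is in part~(c). Your residue argument (which I checked; the identity is correct) shows that $\gR_{d,m,0}$ hits a generic \emph{proper} rational function $\psi=\sum_j c'_j/(z-\beta_j)$. But proper rational functions live on the hyperplane $\{a_0=0\}$; a generic point of $\PP^{2d+1}$ has $a_0\ne 0$ and a nonzero constant term in its partial-fraction expansion. As written, you prove dominance onto $\{a_0=0\}$ --- the part~(d) statement --- but not onto all of $\PP^{2d+1}$. The fix is small (write the target as $A+\sum c'_j/(z-\beta_j)$ with $A\ne0$ and note $\gF_{m,0}$ fixes constants) but must be said.

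On method: your residue calculation is a genuinely different and more conceptual dominance argument than the paper's, which instead uses the block-triangular Jacobian~\eqref{eqn:JabA0CD} and exhibits an explicit monomial $\pm(b_0\cdots b_d)^{m-1}$ with coefficient $\pm1$ in $\det A_{d,m,0}$. The paper needs that $\pm1$ precisely because dominance must be established over $\FF_p$ with $p\mid m$, where the Jacobian vanishes identically and the map is inseparable; the paper then decomposes $\gR_{d,m,0}$ via~\eqref{eqn:RdmprodRdpi} into prime factors and uses the Frobenius structure $H_{\bfb,p}\equiv F_{\bfb^p}\pmod p$. Your residue approach could likely be pushed into characteristic $p$ as well (the $m$-th power map on roots is still dominant), but you do not address it, and without the integrality you cannot even write down the mod-$p$ fiber correctly. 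Finally, your ``inspection rules out'' a nonconstant common factor is too brief; the paper makes this precise by showing the common zero locus $W$ of the coordinate functions equals $\{\bfb=\bfzero\}$, of dimension $d$, too small to be the vanishing locus of a nonconstant common factor in $\PP^{2d+1}$.
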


\begin{example} 
We consider the case~$d=2$ and~$m=2$.  Using the calculation given
later in Example~\ref{ex:gF20gF21}, we find that~$\gR_{2,2,0}$
and~$\gR_{2,2,1}$ are degree~$2$ rational maps
$\PP^5\dashrightarrow\PP^5$ given by the formul\ae
\begin{align*}
  \gR_{2,2,0} 
      &= [b_0a_0,\;b_2a_0-b_1a_1+b_0a_2,\;b_2a_2,
            \;b_0^2,\;2b_2b_0-b_1^2,\;b_2^2], \\
  \gR_{2,2,1} 
      &= [0,\;-b_1a_0+b_0a_1,\;b_2a_1-b_1a_2,\;b_0^2,\;2b_2b_0-b_1^2,\;b_2^2].
\end{align*}
As predicted by Theorem~\ref{theorem:RdmkFabmk}(b), both~$\gR_{2,2,0}$
and~$\gR_{2,2,1}$ have indeterminacy locus equal to
the~$2$-dimensional linear
subspace~$\bigl\{[\bfa,\bfzero]\bigr\}\subset\PP^5$.  One can check
that it requires more than simply blowing up~$\PP^5$ along this
subspace in order to make~$\gR_{2,2,0}$ and~$\gR_{2,2,1}$ into
morphisms.
\end{example}

Taking $k=0$ leads to interesting families of commuting maps.  (See
Proposition~\ref{prop:compositionlaw} for general composition
properties of~$\gR_{m,k,d}$.)

\begin{corollary}
\label{corollary:Rdm0commutingfamily}
Fix a degree $d$. Then
\[
  \{ \gR_{d,m,0} : m=1,2,3,\ldots \}
\]
is a set of \emph{commuting dominant} rational endomorphisms
of~$\PP^{2d+1}_\ZZ$ of algebraic degree~$m$. More precisely,
dehomogenizing and specializing, the maps~$\gR_{d,m,0}$ induce
commuting dominant polynomial endomorphisms of the affine linear
subspaces
\[
  \bigl\{[\bfa,\bfb]\in\PP^{2d+1} : b_0\ne0\bigr\} \cong \AA^{2d+1}
\]
and 
\[
   \bigl\{[\bfa,\bfb]\in\PP^{2d+1} :
     a_0=0~\text{and}~b_0\ne0\bigr\} \cong \AA^{2d} .
\]
\end{corollary}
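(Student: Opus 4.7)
The plan is to deduce the corollary from a single composition identity at the level of rational functions, namely $\gF_{m_2,0}\circ\gF_{m_1,0}=\gF_{m_1m_2,0}$ as self-maps of $K(z)$, and then to transfer it to the rational maps $\gR_{d,m,0}$ and check their behavior on the distinguished affine charts.

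For the composition identity I would compute directly from~\eqref{eqn:Fmkfwmdef}. Setting $n=m_1m_2$ and writing each index $t\in\{0,\ldots,n-1\}$ uniquely as $t=qm_2+p$ with $0\le p<m_2$ and $0\le q<m_1$, the key observations are $\z_n^{m_1}=\z_{m_2}$ and $\z_n^{qm_2}=\z_{m_1}^q$, which give
\[
\gF_{n,0}(\f)(w^n)=\frac{1}{n}\sum_{p=0}^{m_2-1}\sum_{q=0}^{m_1-1}\f(\z_n^p\z_{m_1}^qw)=\frac{1}{m_2}\sum_{p=0}^{m_2-1}\gF_{m_1,0}(\f)\bigl((\z_n^pw)^{m_1}\bigr)=\frac{1}{m_2}\sum_{p=0}^{m_2-1}\gF_{m_1,0}(\f)(\z_{m_2}^pw^{m_1}).
\]
Substituting $u=w^{m_1}$ identifies the last expression with $\gF_{m_2,0}(\gF_{m_1,0}(\f))(u^{m_2})$, proving the identity. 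Since $m_1m_2=m_2m_1$, interchanging the roles of $m_1$ and $m_2$ also gives $\gF_{m_1,0}\circ\gF_{m_2,0}=\gF_{m_1m_2,0}$, so the Landen operators commute on $K(z)$.

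To transfer the identity to the rational maps I would combine Theorem~\ref{theorem:RdmkFabmk}(a) and~(c). By~(c) the maps $\gR_{d,m_1,0}$, $\gR_{d,m_2,0}$, and $\gR_{d,m_1m_2,0}$ are dominant, so there is a nonempty Zariski open subset $U\subset\PP^{2d+1}_\ZZ$ on which $\f_{\bfa,\bfb}$, $\gF_{m_1,0}(\f_{\bfa,\bfb})$, $\gF_{m_2,0}(\f_{\bfa,\bfb})$ and $\gF_{m_1m_2,0}(\f_{\bfa,\bfb})$ all have degree exactly~$d$. For $[\bfa,\bfb]\in U$ part~(a) together with the composition identity yields
\[
\gR_{d,m_2,0}\bigl(\gR_{d,m_1,0}([\bfa,\bfb])\bigr)=\gR_{d,m_1m_2,0}([\bfa,\bfb])=\gR_{d,m_1,0}\bigl(\gR_{d,m_2,0}([\bfa,\bfb])\bigr),
\]
and since $\PP^{2d+1}_\ZZ$ is irreducible and $U$ is dense these equalities hold globally as rational maps. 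Symmetry then delivers commutativity of the family.

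For the affine specializations I would inspect the explicit form of $\gR_{d,m,0}$. The denominator $\prod_{t=0}^{m-1}B(\z_m^tw)$ of the averaged rational function has leading term (in $w$) a nonzero scalar multiple of $b_0^mw^{md}$, so in the homogeneous coordinates of Theorem~\ref{theorem:RdmkFabmk} the $b_0$-coordinate of $\gR_{d,m,0}([\bfa,\bfb])$ is a nonzero scalar multiple of $b_0^m$. Thus $\{b_0\ne0\}$ lies in the locus of definition of $\gR_{d,m,0}$ (consistent with Theorem~\ref{theorem:RdmkFabmk}(b)) and is carried into itself, producing a polynomial endomorphism of $\AA^{2d+1}$ after dehomogenizing by $b_0$. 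A parallel observation---that $\gF_{m,0}(\f)(w^m)\to a_0/b_0$ as $w\to\infty$, whence the $a_0$-coordinate of the image is divisible by~$a_0$---shows that the hyperplane $\{a_0=0\}$ is preserved and gives a polynomial endomorphism of $\AA^{2d}$. Commutativity and dominance of these affine maps follow from the corresponding projective statements, so the main obstacle is the combinatorial bookkeeping in the composition identity; once that is settled the remainder is a translation between the formal identity on $K(z)$ and the geometry of $\PP^{2d+1}_\ZZ$.
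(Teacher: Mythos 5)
Your proof is correct and follows essentially the route the paper implicitly takes: the composition identity you derive is the $k=\ell=0$ case of Proposition~\ref{prop:compositionlaw}, its transfer to rational maps is the paper's equation~\eqref{eqn:RdmkRdnl} (established in the proof of Theorem~\ref{theorem:RdmkFabmk}(c) by matching $\gR_{d,m,k}$ with $\gF_{m,k}$ on a dense open set where degrees are preserved), and your leading-coefficient observations about the $a_0$- and $b_0$-coordinates are exactly the content of Proposition~\ref{prop:resultantGH}(d), with dominance of the affine restrictions coming from Theorem~\ref{theorem:RdmkFabmk}(c,d). The only substantive difference is that you re-derive the composition law from scratch rather than citing the paper's more general version, and you phrase the $a_0$-divisibility via a limit-at-infinity argument rather than reading it off the explicit leading coefficient; both are harmless stylistic choices.
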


\begin{remark}
The classification of commuting rational maps in one variable was
solved by Ritt~\cite{MR1501252} in the 1920s. More recently, there has
been some work on classifying commuting \emph{endomorphisms} of
$\PP^n$~\cite{MR1824960,MR1931758}, as well as various papers,
including~\cite{MR1871293,MR1842291}, that study higher dimensional
Latt\`es maps, and work on commuting birational self-maps of~$\PP^2$
(and more generally of a compact K\"ahler
surface)~\cite{MR2811600}. But there seem to be few non-trivial
examples known of commuting \emph{rational} (non-birational)
self-maps of~$\PP^n$, and as far as we are aware, the family of commuting
Landen maps described in Corollary~\ref{corollary:Rdm0commutingfamily}
has not previously been studied.
\end{remark}

\begin{remark}
If we treat rational maps of degree~$d-1$ as degenerate maps of degree~$d$, 
we obtain a natural embedding
\begin{align*}
  \iota_{d-1,d} : \PP^{2d-1}&\longrightarrow\PP^{2d+1},\\
  [a_0,\ldots,a_{d-1},b_0,\ldots,b_{d-1}]
  &\longmapsto
  [0,a_0,\ldots,a_{d-1},0,b_0,\ldots,b_{d-1}].
\end{align*}
Then the maps in Theorem~\ref{theorem:RdmkFabmk} fit together via
\[
  \gR_{d,m,k}\circ \iota_{d-1,d} = \iota_{d-1,d}\circ\gR_{d-1,m,k}.
\]
\end{remark}

\begin{remark}
Since~$\PP^{2d+1}_\ZZ$ is smooth, the rational function~$\gR_{d,m,k}$
is defined off of a codimension~$2$
subscheme. (Theorem~\ref{theorem:RdmkFabmk}(b) says that in fact, the
indeterminacy locus has codimension~$d+1$.) In
particular,~$\gR_{d,m,k}$ induces a rational map on every special fiber
$\PP^{2d+1}_{\FF_p}\dashrightarrow\PP^{2d+1}_{\FF_p}$, even if~$p\mid
m$, despite the~$\frac{1}{m}$ factor appearing in the
formula~\eqref{eqn:Fmkfwmdef} defining~$\gF_{m,k}$.
\end{remark}

We conclude the introduction by summarizing the contents of this
article. Section~\ref{section:history} briefly describes some of the
history and uses of the Landen
transformation. Section~\ref{section:examples} illustrates the Landen
transform by giving explicit formulas for~$\gF_{m,k}(\f)$ when~$\f$
has degree~$2$ and~$3$ and~$m$ equals~$2$
and~$3$. Section~\ref{section:laurent} and~\ref{section:composition}
give, respectively, the effect of~$\gF_{m,k}$ on formal Laurent series
and an elementary composition formula for~$\gF_{m,k}$. In
Section~\ref{section:Fmkasquotient} we prove a key proposition that
writes~$\gF_{m,k}(\f)(z)$ as a quotient of
polynomials~$G_{\bfa,\bfb,m,k}(z)/H_{\bfb,m}(z)$ whose coefficients
are $\ZZ$-integral polynomials in the coefficients of~$\f$, and we
describe various properties of~$G_{\bfa,\bfb,m,k}(z)$
and~$H_{\bfb,m}(z)$. This material is used in
Section~\ref{section:Fmkasrationalmap} to prove our main result
(Theorem~\ref{theorem:RdmkFabmk}). We conclude in
Section~\ref{section:mapinducedbyHbm} by showing that the coefficients
of the denominator~$H_{\bfb,m}(z)$ of the Landen transformation
induces a morphism~$\PP^d\to\PP^d$ that is birationally conjugate to
the $m$'th power map.

\begin{acknowledgement}
We would like to thank Michael Rosen for his assistance and for
pointing out the identification described in
Corollary~\ref{corollary:affinemonoid}, and Doug Lind and Klaus Schmidt for
suggesting the material in Section~\ref{section:mapinducedbyHbm}. The
second and third authors would also like thank Tzu-Yueh (Julie) Wang
and Liang-Chung Hsia for inviting them to participate in a Conference
on Diophantine Problems and Arithmetic Dynamics held at Academia
Sinica, Taipei, June 2013.
\end{acknowledgement}

\section{History and applications}
\label{section:history}
Let~$K$ be a field that is not of characteristic~$2$.  The classical
Landen transformation is the map~$\gF=\gF_{2,1}$ on the space of rational
functions~$K(z)$ given by the formula
\[
  \gF(\f)(z) = \frac{\f(\sqrt z\,)-\f(-\sqrt z\,)}{2\sqrt z}.
\]
When~$K=\RR$ or~$\CC$, the Landen transformation can be used to
numerically compute the integral~$\int_0^\infty\f(z)\,dz$ for certain
choices of the rational function~$\f$. More precisely, for
appropriate~$\f$ one shows that
\[
  \int_0^\infty\f(z)\,dz =   \int_0^\infty\gF(\f)(z)\,dz
\]
and then studies the dynamics of~$\gF$, i.e., the behavior of the
orbit $(\gF^n(\f))_{n\ge1}$ of the rational map~$\f$ under iteration
of the transformation~$\gF$. See~\cite{%
MR1980323,
MR2210638,
MR1771260,
MR1885619,
MR2220755,
MR1960939,
MaMo2,
MaMo3,
MR2727063,
MR1522656}
for work in this area, as well as~\cite{MaMo1} for a survey of the
theory of Landen transformations.
\par
The origins of the subject go back to Landen's work~\cite{landen1,landen2} 
on iterative methods to compute certain integrals. The method was
rediscovered and extended by Gauss~\cite{gauss}
and is often referred to as Gauss's Arithmetic-Geometric Mean (AGM) method.
\par
The authors of~\cite{MR2210638} also point to a related transformation
\[
  \gC(\f)(z) = \gF_{2,0}(\f)(z) = \frac{\f(\sqrt z\,)+\f(-\sqrt z\,)}{2}
\]
whose dynamics is analyzed in~\cite{BJMMM}, and they indicate that
there are natural generalizations to higher degree transformations
that they plan to study in a future work.  These higher degree
transformations are the maps~$\gF_{m,k}$ described in
Proposition~\ref{proposition:Gphicharac}.

\begin{remark} 
The formula~\eqref{eqn:Fmkfwmdef} for~$\gF_{m,k}$ is given
in~\cite{MR2249990}, where the author determines a basis for the set
of rational functions~$\f(z)\in\CC(z)$ that are fixed by~$\gF_{m,k}$.
\end{remark}

\begin{remark}
We observe that if we view~$K(z)$ as a $K$-vector space,
then~$\gF_{m,k}$ is clearly a $K$-linear transformation of~$K(z)$.
However, when we view~$K(z)$ as a field, the action of~$\gF_{m,k}$ is
more complicated.
\end{remark}

\begin{proposition} 
\label{proposition:FphiinKwm}
Let $m\ge1$ and $k\in\ZZ$. Then for all~$\f(z)\in K(z)$,  the
expression~\eqref{eqn:Fmkfwmdef}
\begin{equation}
  \label{eqn:gFw}
   \frac{1}{mw^k}\sum_{t=0}^{m-1} \z_m^{-kt}\f(\z_m^t w)
\end{equation}
appearing in Proposition~$\ref{proposition:Gphicharac}$ is 
in~$K(w^m)$. Further, it is independent of the choice of a particular
primitive~$m^{\text{th}}$~root of unity~$\z_m$.
\end{proposition}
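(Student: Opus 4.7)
\bigskip
\noindent\emph{Proof plan.} Write
\[
  \Psi(w) \;:=\; \frac{1}{mw^k}\sum_{t=0}^{m-1}\z_m^{-kt}\f(\z_m^t w),
\]
regarded a priori as an element of $K(\z_m)(w)$. I would split the proof into two independent verifications, which together imply the conclusion by elementary Galois theory.

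\medskip
\noindent\textbf{Step 1 (Invariance under $w\mapsto\z_m w$).} The plan is to compute $\Psi(\z_m w)$ by substituting, then reindex. Pulling the factor $\z_m^{-k}$ out of $1/(\z_m w)^k$ and replacing $t$ by $s=t+1 \pmod m$, the factor $\z_m^{-k}$ cancels against the shift in the exponent, using only that $\z_m^m=1$ so that the summand at $s=m$ equals the summand at $s=0$. This gives $\Psi(\z_m w)=\Psi(w)$. Hence $\Psi$ lies in the fixed field of the order-$m$ automorphism $w\mapsto\z_m w$ of $K(\z_m)(w)$, which is $K(\z_m)(w^m)$.

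\medskip
\noindent\textbf{Step 2 (Independence of the choice of $\z_m$).} Let $\z_m'$ be another primitive $m$-th root of unity, so $\z_m'=\z_m^j$ for some $j$ coprime to $m$. Forming $\Psi$ with $\z_m'$ in place of $\z_m$ gives
\[
  \frac{1}{mw^k}\sum_{t=0}^{m-1}\z_m^{-kjt}\f(\z_m^{jt}w),
\]
and since multiplication by $j$ is a bijection of $\ZZ/m\ZZ$, this equals the original $\Psi(w)$ after reindexing $s\equiv jt\pmod{m}$.

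\medskip
\noindent\textbf{Step 3 (Descent to $K$).} The Galois group $\Gal(K(\z_m)/K)$ acts on $\z_m$ by sending it to another primitive $m$-th root of unity, and by naturality this action sends $\Psi$ (as constructed with $\z_m$) to $\Psi$ constructed with the Galois-conjugate primitive root. Step~2 says these are equal, so $\Psi$ is $\Gal(K(\z_m)/K)$-invariant. Combined with Step~1, $\Psi\in K(\z_m)(w^m)^{\Gal(K(\z_m)/K)}=K(w^m)$, as required.

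\medskip
I do not expect a substantive obstacle; the only mild subtlety is keeping track of the prefactor $1/(\z_m w)^k$ in Step~1 (where the hypothesis $m\neq 0$ in $K$ plays no role, only $\z_m^m=1$), and ensuring in Step~3 that the Galois descent is taken over $K(\z_m)$ rather than a larger extension.
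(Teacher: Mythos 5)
Your proposal is correct and follows essentially the same route as the paper's proof: show invariance under $w\mapsto\z_m w$ by reindexing the sum (hence membership in $K(\z_m)(w^m)$, since $K(\z_m)(w)/K(\z_m)(w^m)$ is cyclic of degree $m$ generated by that automorphism), then show invariance under $\z_m\mapsto\z_m^j$ by reindexing again, and conclude by Galois descent from $K(\z_m)(w^m)$ to $K(w^m)$. The paper phrases the first step by passing to $\Kbar(w)/\Kbar(w^m)$ and invoking Kummer theory, whereas you stay over $K(\z_m)$, but this is a cosmetic difference. One small caveat on your parenthetical remark: while the reindexing in Step 1 literally uses only $\z_m^m=1$, the hypothesis that $m\neq 0$ in $K$ is nonetheless doing real work in that step — it is what guarantees that $\z_m$ exists as a primitive root, that $1/m$ makes sense, and that $w\mapsto\z_m w$ has order exactly $m$ so its fixed field is $K(\z_m)(w^m)$ and not something larger.
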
 
\begin{proof}
Let~$\Kbar/K$ be an algebraic closure of~$K$.  The field extension
$\Kbar(w)/\Kbar(w^m)$ is a Kummer extension whose Galois group is
cyclic and generated by the automorphism~$w\to\z w$. (As always, we
are assuming that~$m$ is prime to the characteristic of~$K$.)  But it
is easy to check that the expression~\eqref{eqn:gFw} is invariant
under the substitution~$w\to\z w$. Hence it is in~$\Kbar(w^m)$, and
indeed in~$K(\z_m)(w^m)$. Next we observe that~\eqref{eqn:gFw} is also
invariant under an element~$\s$ in the Galois group
of~$K(\z_m)(w^m)/K(w^m)$, since the effect of such an element is to
send~$\z_m$ to~$\z_m^j$ for some~$j$ satisfying $\gcd(j,m)=1$, so it
simply rearranges the terms in the sum.  This proves
that~\eqref{eqn:gFw} is in~$K(w^m)$, and also shows
that~\eqref{eqn:gFw} does not depend on the choice of~$\z_m$.
\end{proof}

\section{Examples}
\label{section:examples}
We compute some examples of Landen transforms for generic rational
maps of degrees~$2$ and~$3$, i.e., we give explicit formulas for the
rational maps $\gR_{d,m,k}:\PP^{2d+1}\dashrightarrow\PP^{2d+1}$
for small values of~$d,m,k$.

\begin{example}
\label{ex:gF20gF21}
Consider the generic rational map of degree~$2$,
\[
  \f(z) = \frac{a_0z^2+a_1z+a_2}{b_0z^2+b_1z+b_2}.
\]
A simple calculation shows that the two transformations~$\gF_{2,0}$
and~$\gF_{2,1}$ are given by
\begin{align*}
  \gF_{2,0}(\f)(z) &= \frac{b_0a_0z^2 +(b_2a_0-b_1a_1+b_0a_2)z+b_2a_2}
        {b_0^2z^2+(2b_2b_0-b_1^2)z+b_2^2}, \\
  \gF_{2,1}(\f)(z) &= \frac{(-b_1a_0+b_0a_1)z+b_2a_1-b_1a_2}
        {b_0^2z^2+(2b_2b_0-b_1^2)z+b_2^2}, 
\end{align*}
and
\begin{align*}
\gF_{3,0}(\f)(z) &=
    \frac{b_0^2 a_0 z^2 + (-b_2 b_1 a_0 - b_2 b_0 a_1 + b_1^2
       a_1 - b_1 b_0 a_2) z + b_2^2 a_2}
      {b_0^3 z^2 + (-3 b_2 b_1 b_0 + b_1^3 ) z + b_2^3 }, \\
\gF_{3,1}(\f)(z) &=
    \frac{(-b_2 b_0 a_0 + b_1^2 a_0 - b_1 b_0 a_1 + b_0^2 a_2) z +
      (b_2^2 a_1 - b_2 b_1 a_2) }
    { b_0^3 z^2 + (-3 b_2 b_1 b_0 + b_1^3) z + b_2^3 }, \\
\gF_{3,2}(\f)(z) &=
    \frac{ (-b_1 b_0 a_0 + b_0^2 a_1) z + (b_2^2 a_0 - b_2 b_1 a_1 -
      b_2 b_0 a_2 + b_1^2 a_2) }
     { b_0^3 z^2 + (-3 b_2 b_1 b_0 + b_1^3) z + b_2^3 }.
\end{align*}
\end{example}

\begin{example}
\label{ex:gF30gF31gF32}
Similarly, for a generic rational map of degree~$3$,
\[
  \f(z) = \frac{a_0z^3+a_1z^2+a_2z+a_3}{b_0z^3+b_1z^2+b_2x+b_3},
\]
the first few Landen transforms act by
{\small
\begin{align*}
\gF_{2,0} &=
    \frac{b_0a_0z^3 + (b_2a_0 - b_1a_1 + b_0a_2)z^2 + (-b_3a_1
      + b_2a_2 - a_3b_1)z - b_3a_3}
      {b_0^2z^3 + (2b_2b_0 - b_1^2)z^2 + (-2b_3b_1 + b_2^2 )z -
        b_3^2}, \\
\gF_{2,1} &=
       \frac{ (-b_1a_0 + b_0a_1)z^2 + (-b_3a_0 + b_2a_1 - b_1a_2
         + a_3b_0)z + (-b _3a_2 + a_3b_2)}
      {b_0^2z^3 + (2b_2b_0 - b_1^2)z^2 + (-2b_3b_1 + b_2^2)z -
        b_3^2},
\end{align*}%
}
and
{%
\begin{align*}
\gF_{3,0} &= \tfrac{
   \genfrac{}{}{0pt}{1}{
      b_0^2 a_0 z^3 
      + (2 b_3 b_0 a_0 - b_2 b_1 a_0 - b_2 b_0 a_1 + b_1^2 a_1 - b_1
	       b_0 a_2 + b_0^2 a_3) z^2 
      \hspace{1.3in}\hfill}
      {\hspace{1.3in}\hfill
      + (b_3^2 a_0 - b_3 b_2 a_1 - b_3 b_1 a_2 + b_2^2 a_2
		+ 2 b_3 b_0 a_3 - b_2 b_1 a_3) z  
      + b_3^2 a_3 
      }
   }
   {
  b_0^3 z^3 
       + (3 b_3 b_0^2 - 3 b_2 b_1 b_0 + b_1^3) z^2 
       + (3 b_3^2 b_0 - 3 b_3 b_2 b_1 + b_2^3) z 
       + b_3^3
  },\\
\gF_{3,1} &= \tfrac{
   \genfrac{}{}{0pt}{1}{
      (-b_2 b_0 a_0 + b_1^2 a_0 - b_1 b_0 a_1 + b_0^2 a_2) z^2 
      \hspace{1.3in}\hfill}
      {\hspace{.3in}\hfill
      + (-b_3 b_2 a_0 - b_3 b_1 a_1 + b_2^2 a_1  + 2 b_3 b_0 a_2 - b_2 b_1 a_2
	  - b_2 b_0 a_3 + b_1^2 a_3) z 
      + b_3^2 a_2 - b_3 b_2 a_3
      }
   }
   {
  b_0^3 z^3 
       + (3 b_3 b_0^2 - 3 b_2 b_1 b_0 + b_1^3) z^2 
       + (3 b_3^2 b_0 - 3 b_3 b_2 b_1 + b_2^3) z 
       + b_3^3
  },\\
\gF_{3,2} &= \tfrac{
   \genfrac{}{}{0pt}{1}{
     (-b_1 b_0 a_0 + b_0^2 a_1) z^2 
     + (-b_3 b_1 a_0 + b_2^2 a_0 + 2 b_3 b_0 a_1 - b_2 b_1 a_1
	 - b_2 b_0 a_2 + b_1^2 a_2 - b_1 b_0 a_3) z 
      \hspace{.3in}\hfill}
      {\hspace{.3in}\hfill
     + b_3^2 a_1 - b_3 b_2 a_2 - b_3 b_1 a_3 + b_2^2 a_3
    }
  }
  {
  b_0^3 z^3 
       + (3 b_3 b_0^2 - 3 b_2 b_1 b_0 + b_1^3) z^2 
       + (3 b_3^2 b_0 - 3 b_3 b_2 b_1 + b_2^3) z 
       + b_3^3
  }.\\
\end{align*}
}%
\end{example}

\section{The effect of $\gF_{m,k}$ on Laurent series}
\label{section:laurent}
An elementary calculation reveals the effect of~$\gF_{m,k}$ on
a Laurent series around~$0$, and in particular on the series associated to
a rational function.

\begin{proposition}
\label{prop:gFeffectonseries}
Let
\[
  \f(z) = \sum_{n\in\ZZ} a_nz^n
\]
be a \textup(formal\textup) Laurent series. Then
\[
  \gF_{m,k}(\f)(z) 
   =  \sum_{j\in\ZZ} a_{mj+k}z^j.
\]
\end{proposition}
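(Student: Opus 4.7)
The plan is to prove this by direct computation, substituting the Laurent expansion into the defining formula \eqref{eqn:Fmkfwmdef} for $\gF_{m,k}$ and then applying the standard orthogonality relation for $m$'th roots of unity. Since both sides of the claimed identity are formal Laurent series in $z$, it suffices to verify the equality term by term after formally interchanging a finite sum (over $t$) with the Laurent expansion (in $w$).

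Concretely, I would substitute $\f(\zeta_m^t w) = \sum_{n \in \ZZ} a_n \zeta_m^{tn} w^n$ into the right-hand side of \eqref{eqn:Fmkfwmdef} and interchange summations to obtain
\[
  \gF_{m,k}(\f)(w^m) \;=\; \frac{1}{m w^k} \sum_{n \in \ZZ} a_n w^n \sum_{t=0}^{m-1} \zeta_m^{(n-k)t}.
\]
The key step is the character-sum identity
\[
  \sum_{t=0}^{m-1} \zeta_m^{(n-k)t} \;=\;
  \begin{cases} m & \text{if } n \equiv k \pmod{m}, \\ 0 & \text{otherwise,}\end{cases}
\]
which is the usual geometric-series evaluation (valid because $m$ is nonzero in $K$, so $\zeta_m^{n-k} = 1$ forces $m \mid (n-k)$, and otherwise the partial sum of a geometric progression vanishes).

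Applying this identity collapses the sum over $n$ to indices $n = mj + k$ with $j \in \ZZ$, giving
\[
  \gF_{m,k}(\f)(w^m) \;=\; \frac{1}{w^k} \sum_{j \in \ZZ} a_{mj+k} w^{mj+k} \;=\; \sum_{j \in \ZZ} a_{mj+k} (w^m)^j.
\]
Setting $z = w^m$ yields the claimed formula. There is really no substantive obstacle here; the only minor point to note is that the manipulation is purely formal in $\ZZ[\![z, z^{-1}]\!]$, so no convergence question arises, and the answer is independent of the choice of $\zeta_m$ as already guaranteed by Proposition~\ref{proposition:FphiinKwm}.
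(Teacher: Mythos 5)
Your proposal is correct and follows essentially the same route as the paper: substitute the Laurent expansion into the defining formula \eqref{eqn:Fmkfwmdef}, interchange the finite sum over $t$ with the formal sum over $n$, invoke the orthogonality of $m$'th roots of unity, and reindex $n=mj+k$ with $z=w^m$. The only cosmetic difference is that you explicitly spell out the character-sum identity and the final substitution $z=w^m$, whereas the paper leaves these implicit.
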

\begin{proof}
We compute
\begin{align*}
  \gF_{m,k}(\f)(w^m) 
  &= \frac{1}{mw^k}
     \sum_{t=0}^{m-1} \z_m^{-kt}\f(\z_m^t w) \\
  &= \frac{1}{mw^k}
     \sum_{t=0}^{m-1} \z_m^{-kt}\sum_{n\in\ZZ} a_n(\z_m^t w)^n \\
  &= \sum_{n\in\ZZ} a_nw^{n-k}
    \biggl(\frac{1}{m} \sum_{t=0}^{m-1} \z_m^{(n-k)t} \biggr)  \\
  & =  \sum_{\substack{n\in\ZZ\\n\equiv k\MOD{m}\\}} a_nw^{n-k}.
\end{align*}
This completes the proof of Proposition~\ref{prop:gFeffectonseries}.
\end{proof}

\section{Composition of $\gF_{m,k}$ operators}
\label{section:composition}
The transformations~$\gF_{m,k}$ and~$\gF_{n,\ell}$ do not generally
commute, but they do if $k(n-1)=\ell(m-1)$. In particular,
if~$k=\ell=0$, then they commute for all values of~$m$ and~$n$. The
next elementary result gives a general composition formula.

\begin{proposition}
\label{prop:compositionlaw}
For all~$m,n\ge1$ and all $k,\ell\in\ZZ$,
\[
  \gF_{m,k}\circ\gF_{n,\ell} = \gF_{mn,kn+\ell}.
\]
In particular, the $r$'th iterate of~$\gF_{m,k}$ is given by
\[
  \gF_{m,k}^r = \gF_{m^r,(m^{r-1}+m^{r-2}+\cdots+m+1)k}.
\]
\end{proposition}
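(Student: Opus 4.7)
The plan is to bypass the defining identity~\eqref{eqn:Fmkfwmdef} and instead use the transparent index-selection description of~$\gF_{m,k}$ from Proposition~\ref{prop:gFeffectonseries}. Every rational function~$\f(z)\in K(z)$ has a unique formal Laurent expansion $\sum_{n\in\ZZ}a_nz^n$ at~$0$ (with~$a_n=0$ for $n\ll 0$), giving an injective embedding $K(z)\hookrightarrow K((z))$ that respects~$\gF_{m,k}$. It therefore suffices to verify the composition law as an identity of formal Laurent series, where Proposition~\ref{prop:gFeffectonseries} tells us that~$\gF_{m,k}$ acts simply by $\sum a_n z^n \mapsto \sum_j a_{mj+k}z^j$.

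On the Laurent side the verification is a one-line index calculation. For $\f=\sum a_n z^n$, we have $\gF_{n,\ell}(\f)(z)=\sum_j a_{nj+\ell}z^j$, and applying~$\gF_{m,k}$ to this selects the coefficient of~$z^{mi+k}$, namely $a_{n(mi+k)+\ell}=a_{mn\cdot i+(kn+\ell)}$. Hence
\[
\gF_{m,k}\bigl(\gF_{n,\ell}(\f)\bigr)(z) = \sum_{i\in\ZZ} a_{mn\cdot i+(kn+\ell)}z^i = \gF_{mn,\,kn+\ell}(\f)(z),
\]
where the last equality is a third application of Proposition~\ref{prop:gFeffectonseries}. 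This proves the composition law.

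The iteration formula then follows by induction on~$r$: writing $s_r=m^{r-1}+m^{r-2}+\cdots+1$, the inductive hypothesis gives $\gF_{m,k}^{r-1}=\gF_{m^{r-1},\,s_{r-1}k}$, and applying the composition law with $(n,\ell)=(m^{r-1},s_{r-1}k)$ yields $\gF_{m,k}^{r}=\gF_{m^{r},\,km^{r-1}+s_{r-1}k}=\gF_{m^{r},\,s_{r}k}$.

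The main step is essentially free once Proposition~\ref{prop:gFeffectonseries} is in hand, so there is no real obstacle; the only detail worth noting is that the Laurent embedding $K(z)\hookrightarrow K((z))$ is injective and intertwines the two versions of~$\gF_{m,k}$, which is why an identity proved at the Laurent level transfers back to~$K(z)$. A direct manipulation of~\eqref{eqn:Fmkfwmdef}, reindexing the double sum by $u=ms+t$ via the relations $\z_{mn}^{n}=\z_m$ and $\z_{mn}^{m}=\z_n$, would also succeed, but is considerably more tedious and less illuminating.
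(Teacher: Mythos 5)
Your proof is correct, and it takes a genuinely different (and arguably cleaner) route than the paper. The paper proves the composition law by a direct manipulation of the defining formula~\eqref{eqn:Fmkfwmdef}: it fixes compatible roots of unity $\z_m=\z_{mn}^n$, $\z_n=\z_{mn}^m$, writes $z=w^m$, $w=u^n$, expands the double sum, and reindexes with $i=ms+t$. Your argument instead leans on Proposition~\ref{prop:gFeffectonseries}, which converts $\gF_{m,k}$ into the index-selection operator $\sum a_nz^n\mapsto\sum_j a_{mj+k}z^j$ on Laurent series, and then the composition law reduces to the one-line substitution $n(mi+k)+\ell=mn\cdot i+(kn+\ell)$. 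The trade-off is that the paper's proof is self-contained (it needs only the definition), whereas yours leverages the prior section to replace the root-of-unity bookkeeping with a transparent index calculation; you correctly flag the one point that needs attention, namely that the Laurent-expansion map $K(z)\hookrightarrow K((z))$ is injective and intertwines $\gF_{m,k}$ on both sides, so the identity proved in $K((z))$ descends to $K(z)$ (this intertwining is exactly what the proof of Proposition~\ref{prop:gFeffectonseries} supplies). The induction for the iterate formula is routine and correct. Overall the two proofs buy essentially the same result, with yours being shorter and more conceptual at the cost of depending on Proposition~\ref{prop:gFeffectonseries}.
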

\begin{proof}
We choose our roots of unity to satisfy~$\z_m=\z_{mn}^n$
and~$\z_n=\z_{mn}^m$. To ease the computation, we let
$z=w^m$ and $w=u^n$, so $z=u^{mn}$. Then
\begin{align*}
  \gF_{m,k}\bigl(\gF_{n,\ell}(\f)\bigr)(z)
  &= \frac{1}{mu^{nk}}\sum_{t=0}^{m-1} \z_m^{-kt}
      \cdot \gF_{n,\ell}(\f)(\z_m^t w) \\
  &= \frac{1}{mu^{nk}}\sum_{t=0}^{m-1} \z_m^{-kt}
      \biggl(\frac{1}{n(\z_m^{t/n}u)^\ell}
        \sum_{s=0}^{n-1}\z_n^{-\ell s}\f(\z_n^s\z_m^{t/n}u)\biggr) \\
  &= \frac{1}{mnu^{nk+\ell}} \sum_{t=0}^{m-1}\sum_{s=0}^{n-1}
      \z_{mn}^{-(nkt+\ell(ms+t))} \f(\z_{mn}^{ms+t}u) \\
  &\omit\hfill \qquad\text{we let $i=ms+t$,}\\
  &= \frac{1}{mnu^{nk+\ell}} \sum_{i=0}^{mn-1}
      \z_{mn}^{-(nk(i\bmod m) + \ell i)} \f(\z_{mn}^i u) \\
  &= \frac{1}{mnu^{nk+\ell}} \sum_{i=0}^{mn-1}
      \z_{mn}^{-(nk + \ell)i} \f(\z_{mn}^i u) \\
  &=\gF_{mn,nk+\ell}(\f)(z).
\end{align*}
This proves the first formula, and the second follows by induction.
\end{proof}

Proposition~\ref{prop:compositionlaw} allows us to describe the
monoid of~$\gF_{m,k}$~operators in terms of a matrix monoid.

\begin{corollary}
\label{corollary:affinemonoid}
Let~$M$ be the monoid of integral matrices
\[
  M = \left\{
     \begin{pmatrix}
	 m & 0 \\ k & 1 \\
     \end{pmatrix} : m,k\in\ZZ,\;m\ge1
   \right\}
\]
under matrix multiplication. Then the map
\begin{equation}
  \label{eqn:monoidisom}
  M \longrightarrow \{\gF_{m,k} : m,k\in\ZZ,\;m\ge1 \},
  \qquad
  \begin{pmatrix} m&0\\k&1\\ \end{pmatrix}
  \longmapsto \gF_{m,k},
\end{equation}
is a monoid isomorphism.
\end{corollary}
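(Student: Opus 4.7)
The plan is to verify the four conditions that make the map~\eqref{eqn:monoidisom} a monoid isomorphism: compatibility with multiplication, preservation of the identity, surjectivity, and injectivity. The first three are essentially tautological given what has already been established in the excerpt, so the only substantive content lies in injectivity, which I would handle with a short Laurent-polynomial test using Proposition~\ref{prop:gFeffectonseries}.

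For multiplicativity, I would simply compute
\begin{equation*}
\begin{pmatrix} m & 0 \\ k & 1 \end{pmatrix}\begin{pmatrix} n & 0 \\ \ell & 1 \end{pmatrix} = \begin{pmatrix} mn & 0 \\ kn+\ell & 1 \end{pmatrix}
\end{equation*}
and observe that this matches exactly the composition formula $\gF_{m,k}\circ\gF_{n,\ell}=\gF_{mn,kn+\ell}$ furnished by Proposition~\ref{prop:compositionlaw}. The identity matrix $I_2\in M$ corresponds to~$\gF_{1,0}$, and setting $m=1$, $k=0$ in the defining formula~\eqref{eqn:Fmkfwmdef} collapses the sum to a single term and yields $\gF_{1,0}(\f)(w)=\f(w)$, so $\gF_{1,0}$ really is the identity operator. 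Surjectivity is built into the definition of the codomain: every element there has the form $\gF_{m,k}$ with $m\ge 1$ and $k\in\ZZ$.

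The substantive step is injectivity. I would apply Proposition~\ref{prop:gFeffectonseries} to the monomial~$\f(z)=z^r$ (a Laurent polynomial, so the proposition applies verbatim) to obtain
\begin{equation*}
\gF_{m,k}(z^r) = \begin{cases} z^{(r-k)/m} & \text{if } r\equiv k\pmod m, \\ 0 & \text{otherwise.}\end{cases}
\end{equation*}
Now suppose $\gF_{m,k}=\gF_{n,\ell}$ as operators on, say, $\QQ(z)$. Testing both sides on $z^k$ gives the nonzero constant~$1$ on the left, so on the right we must have $k\equiv\ell\pmod n$ together with $(k-\ell)/n=0$, forcing $k=\ell$. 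Testing next on $z^{k+m}$ gives~$z$ on the left, so on the right, using $k=\ell$, we need $(k+m-\ell)/n=m/n=1$, forcing $m=n$. Hence $(m,k)=(n,\ell)$, and the map~\eqref{eqn:monoidisom} is injective.

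The hard part, to the extent there is one, is spotting that the two-monomial probe $\{z^k,\,z^{k+m}\}$ suffices to recover both parameters; everything after that is bookkeeping. No real obstacle stands in the way, and I expect the proof to occupy only a short paragraph once the composition law and the series formula are cited.
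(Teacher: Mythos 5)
Your proposal is correct and follows essentially the same route as the paper: multiplicativity via the matrix identity together with Proposition~\ref{prop:compositionlaw}, surjectivity by inspection, and injectivity by probing with monomials via Proposition~\ref{prop:gFeffectonseries}. The only cosmetic difference is that the paper phrases the injectivity test as $\gF_{m,k}(z^{k+me})=z^e$ for all $e\in\ZZ$ and equates exponents, whereas you isolate the two specific probes $z^k$ and $z^{k+m}$ (i.e.\ $e=0$ and $e=1$), which is exactly what that argument reduces to.
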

\begin{proof}
The map~\eqref{eqn:monoidisom} is clearly surjective, while
Proposition~\ref{prop:compositionlaw} and the matrix multiplication
$
  \left(\begin{smallmatrix}
      m & 0 \\ k & 1 \\
  \end{smallmatrix}\right)
  \left(\begin{smallmatrix}
      n & 0 \\ \ell & 1 \\
  \end{smallmatrix}\right)
  =
  \left(\begin{smallmatrix}
      mn & 0 \\ kn+\ell & 1 \\
  \end{smallmatrix}\right)
$
shows that~\eqref{eqn:monoidisom} is a monoid homomorphism.  For
injectivity, we suppose that~$\gF_{m,k}=\gF_{n,\ell}$.
Proposition~\ref{prop:gFeffectonseries} tells us that $\gF_{m,k}(z^d)$
is equal to $z^{(d-k)/m}$ if $d\equiv k\pmod{m}$, and equal to~$0$
otherwise.  Taking $d=k+me$, our assumption
that~$\gF_{m,k}=\gF_{n,\ell}$ implies that
\[
  z^e = \gF_{m,k}(z^{me+k}) = \gF_{n,\ell}(z^{me+k}) 
  = z^{(me+k-\ell)/n},
\]
where necessarily~$n$ divides~$me+k-\ell$. Equating the exponents, 
we have
\[
  (n-m)e =k-\ell
  \quad\text{for all $e\in\ZZ$.}
\]
The right-hand side is independent of~$e$, and hence we must
have~$n=m$ and $k=\ell$, which concludes the proof
that~\eqref{eqn:monoidisom} is injective.
\end{proof}

\section{Writing $\gF_{m,k}$ as a quotient of integral polynomials}
\label{section:Fmkasquotient}
Our primary goal in this section is to write~$\gF_{m,k}(\f)$, for a
generic rational function~$\f$ of degree~$d$, as a quotient of
$\ZZ$-integral polynomials in~$z$ and the coefficients of~$\f$. We
recall from the introduction that we are identifying the
space~$\Rat_d$ of rational functions of degree~$d$ with an affine open
subset of~$\PP^{2d+1}$.  More precisely, for a $(d+1)$-tuple
$\bfa=[a_0,\ldots,a_d]$, we let
\[
  F_\bfa(X,Y) = a_0X^d + a_1X^{d-1}Y + a_2X^{d-2}Y^2 + 
   \cdots + a_dY^d,
\]
and we associate to each point~$[\bfa,\bfb]\in\PP^{2d+1}$ the rational
map
\[
  \f_{\bfa,\bfb}:\PP^1\longrightarrow\PP^1,
  \qquad
  \f_{\bfa,\bfb}\bigl([X,Y]\bigr)=\bigl[F_\bfa(X,Y),F_\bfb(X,Y)\bigr].
\]
Then~$\Rat_d$ is the complement of the resultant hypersurface
\[
  \Rat_d = \bigl\{[\bfa,\bfb]\in\PP^{2d+1} :
  \Resultant(F_\bfa,F_\bfb)\ne0 \bigr\}.
\]
In order to emphasize this inclusion, we let
\[
  \Ratbar_d\cong\PP^{2d+1}.
\]
Points~$[\bfa,\bfb]\in\Ratbar_d\setminus\Rat_d$ correspond to rational
maps~$\f_{\bfa,\bfb}$ of lower degree, but we note that different
points in~$\Ratbar_d\setminus\Rat_d$ may correspond to the the same
rational map. (For a discussion of~$\Rat_d$ and its various
extensions, quotients, and compactifications, see for
example~\cite[Section~4.3]{MR2316407} or~\cite{MR2884382}.)
\par
It is often convenient to dehomogenize~$z=X/Y$, so by abuse
of notation we will write
\[
  \f_{\bfa,\bfb}(z) 
  = \frac{F_\bfa(z)}{F_\bfb(z)}
  = \frac{F_\bfa(z,1)}{F_\bfb(z,1)}
\]
for the associated rational function and its dehomogenized numerator and
denominator.

\begin{proposition} 
\label{prop:resultantGH}
Let $m\ge1$ and $0\le k<m$.
\begin{parts}
\Part{(a)}
There are unique polynomials
\[
  G_{\bfa,\bfb,m,k}(z) \in \ZZ[\bfa,\bfb,z]
  \quad\text{and}\quad
  H_{\bfb,m}(z) \in \ZZ[\bfb,z]
\]
satisfying
\begin{align}
  \label{eqn:Gabnkwm}
  G_{\bfa,\bfb,m,k}(w^m)
  &= \frac{1}{mw^k}\sum_{t=0}^{m-1} \z_m^{-kt}F_\bfa(\z_m^t w)
           \prod_{s\ne t} F_\bfb(\z_m^s w),\\
  \label{eqn:Hbmwm}
  H_{\bfb,m}(w^m)
   &=\prod_{t=0}^{m-1} F_\bfb(\z_m^t w).
\end{align}
\Part{(b)}
Let~$\f_{\bfa,\bfb}(z)=F_\bfa(z)/F_\bfb(z)$. Then
\[
  \gF_{m,k}(\f_{\bfa,\bfb})(z) 
  = \frac{G_{\bfa,\bfb,m,k}(z)}{H_{\bfb,m}(z)}.
\]
\Part{(c)}
The polynomials $G_{\bfa,\bfb,m,k}(z)$ and $H_{\bfb,m}$ have the
following homogeneity properties\textup:
\begin{parts}
\Part{(i)}
The~$z$-coefficients of~$H_{\bfb,m}(z)$, considered as elements of~$\ZZ[\bfb]$,
are homogeneous of degree~$m$. 
\Part{(ii)}
The~$z$-coefficients of~$G_{\bfa,\bfb,m,k}(z)$, considered as elements
of $\ZZ[\bfa,\bfb]$, are bi-homogeneous of bi-degree~$(1,m-1)$.
\Part{(iii)}
If we make~$\ZZ[\bfa,\bfb,z]$ into a graded $\ZZ$-algebra by assigning
weights
\begin{equation}
  \label{eqn:weights}
  \wt(z)=m\quad\text{and}\quad\wt(a_i)=\wt(b_i)=i,
\end{equation}
then $G_{\bfa,\bfb,m,k}(z)$ and $H_{\bfb,m}(z)$ are weight homogeneous
with weights
\[
  \wt(G_{\bfa,\bfb,m,k}(z))=md-k
  \quad\text{and}\quad \wt(H_{\bfb,m}(z))=md.
\]
\end{parts}
\Part{(d)}
The polynomials~$G_{\bfa,\bfb,m,0}(z)$ and~$H_{\bfb,m}(z)$ have the form
\begin{align*}
  G_{\bfa,\bfb,m,0}(z) &= (-1)^{(m+1)d}a_0b_0^{m-1}z^d + O(z^{d-1}), \\
  G_{\bfa,\bfb,m,k}(z) &= O(z^{d-1})\quad\text{for $1\le k<m$,} \\
  H_{\bfb,m}(z) &= (-1)^{(m+1)d}b_0^mz^d + O(z^{d-1}).
\end{align*}
In particular, both~$G_{\bfa,\bfb,m,0}(z)$ and~$H_{\bfb,m}(z)$
have~$z$-degree~$d$, while $G_{\bfa,\bfb,m,k}(z)$ has~$z$-degree
strictly smaller than~$d$ for~$1\le k<m$\textup{;}
cf.\ Remark~$\ref{remark:monsinGabmk}$.
\Part{(e)}
Let
\[
  F_\bfb(z) = b_0 \prod_{i=1}^d (z-\b_i)
\]
be the factorization of~$F_\bfb(z)$ in some integral extension
of~$\ZZ[\bfb,z]$. Then
\[
  H_{\bfb,m}(z) = (-1)^{(m+1)d} b_0^m \prod_{i=1}^d (z-\b_i^m).
\]
\Part{(f)}
We have
\begin{align}
  \label{eqn:resGHxi}
  \Resultant(G_{\bfa,\bfb,m,k},&H_{\bfb,m}) \notag\\
  &= (-1)^{d(m-k+1)} b_0^{m-1}b_d^{m-1-k} 
        \Resultant(F_\bfa,F_\bfb)\frac{\Disc(H_{\bfb,m})}{\Disc(F_\bfb)}.
\end{align}
\Part{(g)}
We have
\[
  \frac{\Disc(H_{\bfb,m})}{\Disc(F_\bfb)}
  \in \ZZ[b_0,\ldots,b_d],
\]
i.e., the polynomial ${\Disc(F_\bfb)}$ divides the polynomial
${\Disc(H_{\bfb,m})}$ in~$\ZZ[\bfb]$.
\Part{(h)}
Let~$\b_1,\ldots,\b_d$ be the roots of~$F_\bfb(z)$ as in~\textup{(e)},
and assume that $m\ge2$. 
The quotient ${\Disc(H_{\bfb,m})}/{\Disc(F_\bfb)}$ vanishes if and
only if there is a pair of indices~$i\ne j$ such that either\textup:
\[
  \begin{array}{r@{\quad}l}
    \textup{(i)}&\b_i=\b_j=0. \\
    \textup{(ii)}&\b_i\ne\b_j\quad\text{and}\quad \b_i^m=\b_j^m.\\
  \end{array}
\]
\end{parts}
\end{proposition}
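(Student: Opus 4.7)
The plan is to leverage part~(e) to compute $\Disc(F_\bfb)$ and $\Disc(H_{\bfb,m})$ via the standard root formula $\Disc(f)=a_n^{2n-2}\prod_{i<j}(\alpha_i-\alpha_j)^2$, and then identify exactly when the resulting quotient vanishes.

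First, I would observe that $F_\bfb$ has leading coefficient $b_0$ and roots $\b_1,\ldots,\b_d$, while part~(e) tells us $H_{\bfb,m}$ has leading coefficient $(-1)^{(m+1)d}b_0^m$ and roots $\b_1^m,\ldots,\b_d^m$. Since the leading coefficient appears to an even power in the discriminant formula, the sign drops out, and the ratio becomes
\[
  \frac{\Disc(H_{\bfb,m})}{\Disc(F_\bfb)}
  = b_0^{(m-1)(2d-2)} \prod_{i<j} \left(\frac{\b_i^m - \b_j^m}{\b_i - \b_j}\right)^2.
\]
Applying the elementary identity $x^m - y^m = (x-y)P_m(x,y)$ with $P_m(x,y)=x^{m-1}+x^{m-2}y+\cdots+y^{m-1}$ turns this into a clean polynomial identity
\[
  \frac{\Disc(H_{\bfb,m})}{\Disc(F_\bfb)}
  = b_0^{(m-1)(2d-2)} \prod_{i<j} P_m(\b_i,\b_j)^2
\]
in the integral extension of part~(e). (As a by-product, the right-hand side is symmetric in the $\b_i$, giving an independent proof of part~(g).)

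The vanishing analysis is then almost immediate. Since $b_0\ne0$ is implicit in the factorization of part~(e), the quotient vanishes if and only if $P_m(\b_i,\b_j)=0$ for some pair $i\ne j$. I would split into two cases. If $\b_i=\b_j$, then $P_m(\b_i,\b_j)=m\b_i^{m-1}$; because we are in characteristic zero and $m\geq2$, this vanishes iff $\b_i=0$, which is exactly case~(i). If $\b_i\ne\b_j$, then $P_m(\b_i,\b_j)=(\b_i^m-\b_j^m)/(\b_i-\b_j)$ vanishes iff $\b_i^m=\b_j^m$, which is exactly case~(ii). Conversely, each of (i) and~(ii) forces one of the factors $P_m(\b_i,\b_j)$ to vanish, so the stated conditions are both necessary and sufficient.

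The main obstacle, such as it is, is purely bookkeeping: making sure that the two discriminant computations, the factorization $\b_i^m-\b_j^m=(\b_i-\b_j)P_m(\b_i,\b_j)$, and the passage back to $\ZZ[\bfb]$ via the symmetric-function identities $b_i=(-1)^i b_0\, e_i(\b_1,\ldots,\b_d)$ all take place consistently inside the same integral extension of $\ZZ[\bfb,z]$ used in part~(e). Once this setup is in place, every step is a one-line algebraic identity.
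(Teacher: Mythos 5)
Your treatment of (h) follows the paper's own route: invoke (e), compute the discriminant quotient via the root formula, cancel the factors $\b_i-\b_j$ to obtain a product of the $P_m(\b_i,\b_j)$, and classify when each factor vanishes. That case analysis is correct, and your standing characteristic-zero assumption is in fact necessary rather than cosmetic: for $d=m=2$ over $\FF_2$ the quotient is $b_1^2$, which vanishes when $\b_1=\b_2\ne0$, contradicting the stated dichotomy; the paper leaves the hypothesis that the characteristic not divide $m$ implicit.

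However, the proposal has genuine gaps. The parenthetical claim that symmetry in the $\b_i$ ``gives an independent proof of part (g)'' is incomplete: since $e_i(\b_1,\ldots,\b_d)=(-1)^i b_i/b_0$, symmetry only shows that $\Disc(H_{\bfb,m})/\Disc(F_\bfb)\in\ZZ[\bfb,b_0^{-1}]$, and you still have to rule out the negative powers of $b_0$. The paper does this by invoking the irreducibility of $\Disc(F_\bfb)$ in $\CC[\bfb]$, hence $b_0\nmid\Disc(F_\bfb)$; without that input, (g) is not proved. The proposal also does not touch parts (a)--(f), and in particular (f), the explicit resultant identity, is the bulk of the paper's computation; it should at least be flagged that you are treating it as given. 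Finally, the paper spends effort proving $\Disc(H_{\bfb,m})\notin b_0\ZZ[\bfb]$ (via a monomial-tracking argument in a ring with $\g_i=\b_i^{-1}$), a step you sidestep by declaring $b_0\ne0$ ``implicit in (e).'' That reading is defensible given (h)'s hypothesis presupposes $d$ finite roots, but you should note that the paper handles this degeneracy explicitly.
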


\begin{remark}
Note that the resultant formula~\eqref{eqn:resGHxi} implicitly assumes
that~$F_\bfa$ and~$F_\bfb$ have degree~$d$. In other words, they 
should first be homogenenized to be polynomials of degree~$d$,
then the polynomials~$G_{\bfa,\bfb,m,k}$ and~$H_{\bfb,m}$ are also
homogeneous of degree~$d$ and the resultant is calculated accordingly.
With this convention, we see that
\[
  \deg_z\bigl(\gF_{m,k}(\f_{\bfa,\bfb})\bigr)
  = \deg_z(\f_{\bfa,\bfb})
  \quad\Longleftrightarrow\quad
  \Resultant(G_{\bfa,\bfb,m,k},H_{\bfb,m})\ne 0.
\]
Thus Proposition~\ref{prop:resultantGH}(f) can be used to answer the
question of whether the operator~$\gF_{m,k}$ preserves the degree
of~$\f(z)$.
\end{remark}

\begin{example}
\label{ex:resultantGH}
We illustrate Proposition~\ref{prop:resultantGH} for~$d=2$ and~$m=2$. 
We have
\begin{align*}
G_{\bfa,\bfb,2,0} &= b_0a_0z^2 + (b_2a_0-b_1a_1+b_0a_2)z+b_2a_2,\\
G_{\bfa,\bfb,2,1} &= (-b_1a_0+b_0a_1)z+b_2a_1-b_1a_2,\\
H_{\bfb,2} &= b_0^2z^2+(2b_2b_0-b_1^2)z+b_2^2,
\end{align*}
from Example~\ref{ex:gF20gF21}. The resultant of
the quadratic polynomials~$F_\bfa$ and~$F_\bfb$
is given by a well-known formula~\cite[\S27]{vanderwaerden:algebra},
while
\[
  \Disc(H_{\bfb,2}) = b_1^2(-4b_2b_0 + b_1^2) = b_1^2\Disc(F_\bfb).
\]
Then formulas for $\Resultant(G_{\bfa,\bfb,2,0},H_{\bfb,2})$ and
$\Resultant(G_{\bfa,\bfb,2,1},H_{\bfb,2})$ can be derived using
Proposition~\ref{prop:resultantGH}(f).
\end{example}

\begin{remark}
\label{remark:monsinGabmk}
It is possible to compute some of the other monomials appearing 
in $G_{\bfa,\bfb,m,k}(z)$ and $H_{\bfb,m}$ by evaluating more complicated 
sums and products of powers of roots of unity. For example,
as an element of~$\ZZ[\bfa,\bfb,z]$, the polynomial~$G_{\bfa,\bfb,m,k}(z)$
contains the monomials 
\[
   (-1)^{(m+1)d}a_{m-k}b_0^{m-1}z^{d-1}
   \quad\text{and}\quad
   (-1)^{(m+1)d+1}a_{m-k-1}b_0^{m-2}b_1z^{d-1}.
\]
In particular, if~$1\le k<m$, then $G_{\bfa,\bfb,m,k}(z)$
has~$z$-degree equal to~$d-1$. We omit the proof.
\end{remark}

Since it will come up frequently, we record here the elementary
fact
\begin{equation}
  \label{eqn:prodzmtneg1m1}
  \prod_{t=0}^{m-1} \z_m^t = (-1)^{m+1}.
\end{equation}

\begin{proof}[Proof of Proposition $\ref{prop:resultantGH}$]
For the moment, we let
\[
  g(w) = w^{m-k} \sum_{t=0}^{m-1} \z_m^{-kt}F_\bfa(\z_m^t w)
           \prod_{s\ne t} F_\bfb(\z_m^s w),
  \quad
  h(w) = \prod_{t=0}^{m-1} F_\bfb(\z_m^t w).
\]
It is clear from these formulas that $g(w)\in\ZZ[\z_m][\bfa,\bfb,w]$
and $h(w)\in\ZZ[\z_m][\bfb,w]$. We claim that
\begin{equation}
  \label{eqn:ghzmwghw}
  g(\z_m w)=g(w)\quad\text{and}\quad h(\z_m w)=h(w).
\end{equation}
This is clear for~$h$, while for~$g$ we compute
\begin{align*}
  g(\z_m w) 
  & = (\z_m w)^{m-k}  \sum_{t=0}^{m-1} \z_m^{-kt}F_\bfa(\z_m^{t+1} w)
           \prod_{s\ne t} F_\bfb(\z_m^{s+1} w) \\ 
  & = \z_m^{-k} w^{m-k}  \sum_{t=1}^{m} \z_m^{-k(t-1)}F_\bfa(\z_m^{t} w)
           \prod_{s\ne t} F_\bfb(\z_m^{s} w) \\ 
  &= g(w).
\end{align*}
It follows from~\eqref{eqn:ghzmwghw} that
$g(w)\in\ZZ[\z_m][\bfa,\bfb,w^m]$ and
$h(w)\in\ZZ[\z_m][\bfb,w^m]$. (Note that~$\z_m$ is a primitive $m$'th
root of unity.)
\par
It is also clear from the formulas for~$g$ and~$h$
that they do not change if we replace~$\z_m$ by any other
primitive $m$'th root of unity. Hence their coefficients are fixed
by~$\Gal\bigl(\QQ(\z_m)/\QQ\bigr)$, which shows that
\[
  g(w)\in\ZZ[\bfa,\bfb,w^m]
  \quad\text{and}\quad
  h(w)\in\ZZ[\bfb,w^m].
\]
This completes the proof that~$H_{\bfb,m}(z)=h(z^{1/m})$ is
in~$\ZZ[\bfb,z]$. Further, in order to show that
\[
  G_{\bfa,\bfb,m,k}(z)=\frac{1}{mz}g(z^{1/m}) \in\ZZ[\bfa,\bfb,z],
\]
it remains only to prove that~$g(w)$ is in the ideal
$mw^m\ZZ[\bfa,\bfb,w^m]$.
\par
The definition of~$g(w)$ shows that~$g(w)$ is a multiple of~$w^{m-k}$,
and $m-k>0$ by assumption, so~$g(0)=0$. Since we also know that~$g(w)$
is a polynomial in~$w^m$, it follows that~$g(w)$ is a multiple of~$w^m$,
i.e., $g(w)\in w^m\ZZ[\bfa,\bfb,w^m]$.
\par
We next prove~(b), after which we will complete the proof of~(a). 
Thus
\begin{align*}
  \gF_{m,k}(\f_{\bfa,\bfb})(z) 
  &= \frac{1}{mw^k} \sum_{t=0}^{m-1} \z_m^{-kt}\f_{\bfa,\bfb}(\z_m^t w) \\
  &= \frac{1}{mw^k} \sum_{t=0}^{m-1} \z_m^{-kt}
        \frac{F_\bfa(\z_m^t w)}{F_\bfb(\z_m^t w)} \\
  &= \frac{\displaystyle
           \frac{1}{mw^k}\sum_{t=0}^{m-1} \z_m^{-kt}F_\bfa(\z_m^t w)
           \prod_{s\ne t} F_\bfb(\z_m^s w) }
        {\displaystyle\prod_{t=0}^{m-1} F_\bfb(\z_m^t w) } \\
  &= \frac{G_{\bfa,\bfb,m,k}(z)}{H_{\bfb,m}(z)},
\end{align*}
which gives~(b).
\par
We observe that we can expand $F_\bfb(w)^{-1}$ as a Laurent series
\[
  \frac{1}{F_\bfb(w)}
  = \sum_{j=j_0}^\infty c_j(\bfb)w^j
  \quad\text{with $c_j(\bfb)\in\ZZ[\bfb,b_d^{-1}]$.}
\]
This allows us to expand~$g(w)$ as
\begin{align*}
  g(w) 
  &= mw^m G_{\bfa,\bfb,m,k}(w^m) 
     \quad\text{by definition,}\\
  &= mw^m \gF_{m,k}(\f_{\bfa,\bfb})(w^m) H_{\bfb,m}(w^m) 
      \quad\text{from (b),}\\
  &= w^{m-k} \left(\sum_{t=0}^{m-1} \z_m^{-kt}\f_{\bfa,\bfb}(\z_m^t w)\right)
      H_{\bfb,m}(w^m) 
     \quad\text{by definition,}\\
  &= w^{m-k} \left(\sum_{t=0}^{m-1} 
       \z_m^{-kt} \frac{F_\bfa(\z_m^t w)}{F_\bfb(\z_m^t w)}\right)
      H_{\bfb,m}(w^m)  \\
  &= w^{m-k} \left(\sum_{t=0}^{m-1} 
       \z_m^{-kt} \sum_{i=0}^d  a_{d-i}(\z_m^t w)^i
                      \sum_{j=j_0}^\infty c_j(\bfb)(\z_m^tw)^j \right)
      H_{\bfb,m}(w^m) \\
  &= w^{m-k} \left(\sum_{i=0}^d \sum_{j=j_0}^\infty a_{d-i} c_j(\bfb)w^{i+j}
             \sum_{t=0}^{m-1} \z_m^{(-k+i+j)t} \right) 
      H_{\bfb,m}(w^m) \\
  &= m w^m \left(\sum_{\substack{0\le i\le d,\; j\ge j_0\\ i+j\equiv k\pmod{m}\\}}
         a_{d-i} c_j(\bfb)w^{i+j-k}\right)
      H_{\bfb,m}(w^m).
\end{align*}
This last expression shows that the Laurent series for~$g(w)$ has the
form
\[
  g(w) \in m w^{-me} \ZZ[\bfa,\bfb,b_d^{-1}][\![w^m]\!]
  \quad\text{for some integer $e\ge0$.}
\]
But we proved earlier that $g(w)$ is a polynomial
in~$w^m\ZZ[\bfa,\bfb,w^m]$. Hence
\[
  g(w) 
  \in w^m\ZZ[\bfa,\bfb,w^m] \cap m w^{-me} \ZZ[\bfa,\bfb,b_d^{-1}][\![w^m]\!]
  = mw^m\ZZ[\bfa,\bfb,w^m],
\]
which shows that
\[
  G_{\bfa,\bfb,m,k} = \frac{1}{mw^m}g(w) \in \ZZ[\bfa,\bfb,w^m].
\]
This completes the proof of~(a).
\par
We next consider the homogeneity properties described in~(c).  We
know from~(a) that the $z$-coefficients of~$G_{\bfa,\bfb,m,k}(z)$
and~$H_{\bfb,m}(z)$ are in~$\ZZ[\bfa,\bfb]$ and~$\ZZ[\bfb]$,
respectively.  It is clear from the formula~\eqref{eqn:Gabnkwm}
for~$G_{\bfa,\bfb,m,k}$ that it is a $\QQ(\z_m)$-linear combination of
monomials of the form
\begin{align*}
  M_\bfj(\bfa,\bfb,w)  
  &= w^{-k} a_{j_1}w^{d-j_1} b_{j_2}w^{d-j_2} b_{j_3}w^{d-j_3} \cdots b_{j_m}w^{d-j_m} \\
  &= a_{j_1}b_{j_2}\cdots b_{j_m} w^{md-k-j_1-\cdots-j_m}.
\end{align*}
The~$(\bfa,\bfb)$ coefficients of these monomials are clearly
bi-homogeneous of bi-degree~$(1,m-1)$ in the variables~$(\bfa,\bfb)$.
Further, using the weights described by~\eqref{eqn:weights}, so in
particular $\deg(w)=\frac{1}{m}\deg(z)=1$, we have
\[
  \wt\bigl(M_\bfj(\bfa,\bfb,w)\bigr)
  = md-k.
\]
This completes the proof that~$G_{\bfa,\bfb,m,k}$ has the indicated
bi-degree and weight.  The proof for~$H_{\bfb,m}$ is similar, but
easier, so we leave it for the reader. 
\par
We turn to~(d).  We will make frequent use of~\ref{eqn:prodzmtneg1m1}
without further comment. The highest degree term of~$H_{\bfb,m}(w^m)$
is
\[
  \prod_{t=0}^{m-1} b_0(\z_m^tw)^d = (-1)^{(m+1)d} b_0^d w^{md},
\]
so~$H_{\bfb,m}(z)$ has the indicated form. Similarly, the highest degree term
of~$G_{\bfa,\bfb,m,k}(w^m)$ is
\begin{align*}
  \frac{1}{mw^k} \sum_{t=0}^{m-1} \z_m^{-kt} a_0 (\z_m^tw)^d 
       & \prod_{s\ne t} b_0(\z_m^sw)^d \\
  &=   \frac{a_0b_0^{m-1}w^{md-k}}{m} 
            \sum_{t=0}^{m-1} \z_m^{-kt} \prod_{s=0}^{m-1} \z_m^{ds} \\
  &= \begin{cases}
      (-1)^{(m+1)d}a_0b_0^{m-1}w^{md} &\text{if $k=0$,} \\
      0&\text{if $1\le k<m$.}\\
     \end{cases}
\end{align*}
Hence~$G_{\bfa,\bfb,m,k}(z)$ also has the indicated form.
This completes the proof of~(d).
\par
For~(e) we compute
\begin{align*}
  H_{\bfb,m}(w^m)
  &= \prod_{t=0}^{m-1} F_\bfb(\z_m^tw) \\
  &= \prod_{t=0}^{m-1} \left(b_0 \prod_{i=1}^d (\z_m^t w-\b_i)\right) \\
  &= b_0^m \prod_{i=1}^d \prod_{t=0}^{m-1} (\z_m^t w-\b_i) \\
  &= b_0^m \prod_{i=1}^d \Bigl( (-1)^m (\b_i^m-w^m) \Bigr) \\
  &= (-1)^{(m+1)d} b_0^m \prod_{i=1}^d  (w^m-\b_i^m).
\end{align*}
\par
To prove~(f), we use the fact that for any polynomials we have
\[
  \Resultant\bigl(f(w^m),g(w^m)\bigr)
  =   \Resultant\bigl(f(w),g(w)\bigr)^m.
\]
So we compute the resultant of~$G_{\bfa,\bfb,m,k}(z)$
and~$H_{\bfb,m}(z)$ with respect to the~$w$ variable and then take
the~$m^{\text{th}}$~root. We use various elementary formulas such as
\begin{align*}
  \Resultant\bigl(f(\a z),g(\a z)\bigr) 
 & = 
  \a^{(\deg f)(\deg g)}
  \Resultant\bigl(f(z),g(z)\bigr),\\
  \Resultant\bigl(Af(z),Bg(z)\bigr) 
    &= A^{\deg g}B^{\deg f}  \Resultant\bigl(f(z),g(z)\bigr),\\
  \Resultant\bigl(f(z),f'(z)\bigr) &= 
     (-1)^{(n^2-n)/2} a_0 \Disc\bigl(f(z)\bigr),
    \quad\text{where $n=\deg(f)$.}
\end{align*}
Then
\begin{align*}
  \Resultant&\bigl(G_{\bfa,\bfb,m,k}(w),H_{\bfb,m}(w)\bigr)^m \\
  &=\Resultant\bigl(G_{\bfa,\bfb,m,k}(w^m),H_{\bfb,m}(w^m)\bigr) \\
  &=\Resultant\left(
       \frac{1}{mw^k}\sum_{t=0}^{m-1} \z_m^{-kt}F_\bfa(\z_m^t w)
           \prod_{s\ne t} F_\bfb(\z_m^s w),
       \prod_{r=0}^{m-1} F_\bfb(\z_m^r w)
     \right) \\
  &=\prod_{r=0}^{m-1}\Resultant\left(
       \frac{1}{mw^k}\sum_{t=0}^{m-1} \z_m^{-kt}F_\bfa(\z_m^t w)
           \prod_{s\ne t} F_\bfb(\z_m^s w),
        F_\bfb(\z_m^r w)
     \right) \\
  &=\prod_{r=0}^{m-1}\Resultant\left(
       \frac{1}{mw^k}\z_m^{-kr}F_\bfa(\z_m^r w)
           \prod_{s\ne r} F_\bfb(\z_m^s w),
        F_\bfb(\z_m^r w)
     \right) \\
  &=\prod_{r=0}^{m-1}
           \frac{\Resultant\left(
           \z_m^{-kr}F_\bfa(\z_m^r w),
        F_\bfb(\z_m^r w)
     \right)}{\Resultant\bigl(mw^k,F_\bfb(\z_m^r w)\bigr)}
        \\*
  &\hspace{1in}{}\times
     \prod_{r=0}^{m-1} \prod_{s\ne r} 
         \Resultant\left(F_\bfb(\z_m^s w),F_\bfb(\z_m^r w)\right) \\
  &=\prod_{r=0}^{m-1}
           (\z_m^{-kr})^d (\z_m^r)^{d^2} \frac{
           \Resultant\left(F_\bfa(w),F_\bfb(w) \right)}
     {m^d b_d^k}
        \\*
  &\hspace{1in}{}\times
     \prod_{r=0}^{m-1} \prod_{s\ne r}  (\z_m^s)^{d^2}
         \Resultant\left(F_\bfb(w),F_\bfb(\z_m^{r-s} w)\right) \\
  &= \pm
       \left(\Resultant\left(F_\bfa(w),F_\bfb(w) \right)m^{-d}b_d^{-k}
          \right)^m \\
  &\hspace{1in}{}\times
    \biggl(\prod_{r=1}^{m-1}
         \Resultant\left(F_\bfb(w),F_\bfb(\z_m^{r} w)\right)\biggr)^m.
\end{align*}
Taking~$m^{\text{th}}$ roots yields 
\begin{multline}
  \label{eqn:resGabmkHabmk}
  \Resultant\bigl(G_{\bfa,\bfb,m,k}(w),H_{\bfb,m}(w)\bigr) \\*
  = \xi
       \Resultant\left(F_\bfa(w),F_\bfb(w) \right)m^{-d}b_d^{-k} 
    \prod_{r=1}^{m-1}
         \Resultant\left(F_\bfb(w),F_\bfb(\z_m^{r} w)\right)
\end{multline}
for some~$\xi\in\bfmu_{2m}$, and aside from evaluating~$\xi$, it only
remains to deal with the final product.
\par
As in~(e), we factor~$F_\bfb$ as $F_\bfb(z) = b_0\prod_{i=1}^d
(z-\b_i)$.  Then
\begin{align*}
  \prod_{r=1}^{m-1}&\Resultant\left(F_\bfb(w),F_\bfb(\z_m^{r} w)\right)\\*
  &= \prod_{r=1}^{m-1}\biggl(
         b_0^{2d}\z_m^{rd} \prod_{i,j=1}^d (\b_i-\z_m^{-r}\b_j)\biggr) \\
  &=\pm b_0^{2d(m-1)}
      \biggl( \prod_{i=1}^d \b_i^{m-1}\prod_{r=1}^{m-1}(1-\z_m^{-r})^d \biggr)
      \biggl(\prod_{i\ne j} \prod_{r=1}^{m-1} (\b_i-\z_m^{-r}\b_j)\biggr) \\
  &=\pm b_0^{(2d-1)(m-1)} b_d^{m-1} m^d
      \biggl(\prod_{i\ne j} \prod_{r=1}^{m-1} (\b_i-\z_m^{-r}\b_j)\biggr)  \\
  &=\pm b_0^{(2d-1)(m-1)} b_d^{m-1} m^d
    \left(\prod_{i\ne j} 
           \frac{\prod_{r=0}^{m-1} (\b_i-\z_m^{-r}\b_j)}{\b_i-\b_j}\right) \\
  &=\pm b_0^{(2d-1)(m-1)} b_d^{m-1} m^d
      \prod_{i\ne j} \frac{\b_i^m-\b_j^m}{\b_i-\b_j} \\
  &=\pm b_0^{m-1} b_d^{m-1} m^d
        \frac{\Disc(H_{\bfb,m})}{\Disc(F_\bfb)},
\end{align*}
where the last equality uses the formulas
\begin{align}
  \label{eqn:discFb}
  \Disc(F_\bfb) &= \pm b_0^{2d-2} \prod_{i\ne j} (\b_i-\b_j),\\
  \label{eqn:discPmFb}
  \Disc(H_{\bfb,m}) &=  \pm b_0^{m(2d-2)} \prod_{i\ne j} (\b_i^m-\b_j^m),
\end{align}
the latter of which follows from~(e). Hence
\[
  \prod_{r=1}^{m-1}\Resultant\left(F_\bfb(w),F_\bfb(\z_m^{r} w)\right)
  =\pm b_0^{m-1} b_d^{m-1} m^d
        \frac{\Disc(H_{\bfb,m})}{\Disc(F_\bfb)}.
\]
Substituting this into~\eqref{eqn:resGabmkHabmk} gives
\begin{equation}
  \label{eqn:resGHxi2}
  \Resultant(G_{\bfa,\bfb,m,k},H_{\bfb,m}) \\
  = \xi\Resultant(F_\bfa,F_\bfb)
       b_0^{m-1}b_d^{m-1-k}
        \frac{\Disc(H_{\bfb,m})}{\Disc(F_\bfb)}.
\end{equation}
\par
In order to complete the proof of~(f), it remains to evaluate~$\xi$.
Since~\eqref{eqn:resGHxi2} is an identity in~$\ZZ[\bfa,\bfb]$, we 
see that~$\xi\in\{\pm1\}$, and it suffices to compute~$\xi$ for
a single pair~$[\bfa,\bfb]\in\ZZ$ such that
$\Resultant(G_{\bfa,\bfb,m,k},H_{\bfb,m})\ne0$. 
We only sketch the proof, since for our applications, it suffices to
know that~$\xi$ is a root of unity.  Taking $F_\bfa(z)=z^d$ and
$F_\bfb(z)=(z-1)^d$, an easy calculation shows that the right-hand
side of~\eqref{eqn:resGHxi2} equals $\xi(-1)^{d(m-k)}m^{d(d-1)}$,
while a slightly more complicated calculation shows that the left-hand
side of~\eqref{eqn:resGHxi2} equals~$(-1)^dm^{d(d-1)}$.
Hence $\xi=(-1)^{d(m-k+1)}$.
\par
For~(g), we use~\eqref{eqn:discFb} and~\eqref{eqn:discPmFb} to write
\begin{equation}
  \label{eqn:DishHDiscFprodsum}
  \frac{\Disc(H_{\bfb,m})}{\Disc(F_\bfb)}
  = \pm b_0^{(m-1)(2d-2)} \prod_{i\ne j} \sum_{k=0}^{m-1} \b_i^k\b_j^{m-1-k}.
\end{equation}
The product of sums is symmetric in~$\b_1,\ldots,\b_d$, so
$\Disc(H_{\bfb,m})/\Disc(F_\bfb)$ is in $\ZZ[\bfb,b_0^{-1}]$. On the
other hand, we know that $\Disc(F_\bfb)$ is in~$\ZZ[\bfb]$ and that it
is irreducible in~$\CC[\bfb]$;
see~\cite[\S28]{vanderwaerden:algebra}. In particular, it is not
divisible by~$b_0$, so $\Disc(H_{\bfb,m})/\Disc(F_\bfb)$ is in
$\ZZ[\bfb]$. Alternatively, we can see that $\Disc(F_\bfb)$ is not
divisible by~$b_0$ in~$\ZZ[\bfb]$  directly from the formula
\[
  \Disc(b_0z^d+b_1z^{d-1}+b_d)=b_d^{d-2}(d^db_db_0^{d-1}-(-1)^d(d-1)^{d-1}b_1^d)
\]
for the discriminant of a trinomial.  This gives~(g).
\par
Finally, we see that~(h) follows immediately
from~\eqref{eqn:DishHDiscFprodsum} provided that we can show that
$\Disc(H_{\bfb,m})/\Disc(F_\bfb)$ does not vanish when~$b_0=0$.  We
will  show that~$\Disc(H_{\bfb,m})$ is not in the
ideal of~$\ZZ[\bfb]$ generated by~$b_0$. 
It is convenient to work in the ring
\[
  R = \ZZ[b_d^{-1},\g_1,\g_2,\ldots,\g_d]
  \quad\text{with $\g_i=\b_i^{-1}$.}
\]
We note that~$\g_1,\ldots,\g_d$ satisfy
\[
  z^dF_\bfb(z^{-1}) = b_0+b_1z+\cdots+b_dz^d = b_d \prod_{i=1}^d (z-\g_i),
\]
so in particular~$\g_1,\ldots,\g_d$ are algebraically independent and
integral over $\ZZ[b_d^{-1},\bfb]$. Further, 
\[
  b_0 = (-1)^d b_d\g_1\g_2\cdots\g_d,
  \quad\text{so as ideals we have}\quad
  b_0R = \g_1\g_2\cdots\g_d R.
\]
Rewriting the formula~\eqref{eqn:discPmFb} for $\Disc(H_{\bfb,m})$ in
terms of~$\g_1,\ldots,\g_d$ yields
\[
  \Disc(H_{\bfb,m}) = \pm b_d^{m(2d-2)} \prod_{i\ne j} (\g_i^m-\g_j^m).
\]
Since~$b_d\in R^*$, we are reduced to the following assertion.
\begin{equation}
  \label{eqn:prodgimgjmnotin}
  \text{Claim:}\quad
  \prod_{i\ne j} (\g_i^m-\g_j^m) \notin \g_1\g_2\cdots\g_d R.
\end{equation}
In the product~\eqref{eqn:prodgimgjmnotin} we consider the monomial
\begin{equation}
  \label{eqn:g1m2dg2m2d}
  (\g_1^m)^{2(d-1)}(\g_2^m)^{2(d-2)}(\g_3^m)^{2(d-3)}
  \cdots (\g_{d-2}^m)^{2\cdot2}(\g_{d-1}^m)^{2\cdot1}(\g_{d}^m)^{2\cdot0}.
\end{equation}
There is a \emph{unique} way to choose a term in each binomial in the
product~\eqref{eqn:prodgimgjmnotin} to get this monomial, since to get
$(\g_1^m)^{2(d-1)}$ we need to take~$\g_1^m$ in every
term~$\g_i^m-\g_j^m$ in which either~$i$ or~$j$ is~$1$; then to get
$(\g_2^m)^{2(d-2)}$ we need to take~$\g_2^m$ in every remaining
term~$\g_i^m-\g_j^m$ in which either~$i$ or~$j$ is~$2$; etc.  Hence
the product on the left-hand side of~\eqref{eqn:prodgimgjmnotin}
contains a monomial~\eqref{eqn:g1m2dg2m2d} that is not in the
ideal~$\g_1\g_2\cdots\g_d R$. (Notice that the
monomial~\eqref{eqn:g1m2dg2m2d} is not a multiple of~$\g_d$.)
This completes the proof that $\Disc(H_{\bfb,m})$ is not in the
ideal $b_0\ZZ[\bfb]$.
\end{proof}

\section{The operator $\gF_{m,k}$ as a rational map}
\label{section:Fmkasrationalmap}
In this section we show that~$\gF_{m,k}$ induces a rational map on the
projective space~$\Ratbar_d\cong\PP^{2d+1}$, and in particular, we
prove Theorem~\ref{theorem:RdmkFabmk} stated in the introduction.

It is natural to ask whether the~$\gF_{m,k}$ operators preserve the
degree of the rational map~$\f$.  The answer is clearly no.  For
example, if~$\f(z)=\sum a_iz^i\in K[z]$ is a polynomial of degree~$d$,
then Proposition~\ref{prop:gFeffectonseries} tells us that
\[
  \gF_{m,k}(\f)(z)
  =  \sum_{j=0}^{\lfloor(d-k)/m\rfloor} a_{k+jm} z^j,
\]
so
\[
  \deg\bigl(\gF_{m,k}(\f)\bigr)
  \le \left\lfloor\frac{\deg(\f)-k}{m}\right\rfloor \le\frac{1}{m}\deg(\f).
\]

We start by describing a large class of degree~$d$ rational maps
whose degree is preserved by~$\gF_{m,k}$.

\begin{definition}
Let $F(X,Y)\in K[X,Y]$ be a homogeneous polynomial of degree~$d$, and
let $m\ge1$.  We say that~$F$ is \emph{$m$-nondegenerate} if~$F(z,1)$
has degree~$d$ and if the roots~$\g_1,\ldots,\g_d$ of~$F(z,1)$ in~$\Kbar$
are nonzero and have the property that for all $i\ne j$,
\[
  \text{either $\g_i=\g_j$ or $\g_i^m\ne\g_j^m$.}
\]
If~$F$ is $m$-nondegenerate for all~$m\ge2$, we say simply that~$F$ is
\emph{nondegenerate}.
\end{definition}

\begin{definition}
We set
\[
  \Rat_d^\mnondeg
  = \{ \f_{\bfa,\bfb}\in\Rat_d : \text{$F_\bfb$ is $m$-nondegenerate} \}.
\]
\end{definition}

\begin{corollary} 
\label{theorem:RmkdonRatd}
Let $m\ge2$ and $0\le k<m$. 
\begin{parts}
\Part{(a)}
Identifying $\Rat_d$ as a subset of~$\PP^{2d+1}$, the
set~$\Rat_d^\mnondeg$ is the complement of a hypersurface
of~$\PP^{2d+1}$.
\Part{(b)}
All $\f_{\bfa,\bfb}\in\Rat_d^\mnondeg$ satisfy
\[
  \deg_z\bigl( \gF_{m,k}(\f_{\bfa,\bfb}) \bigr) = d.
\]
\end{parts}
\end{corollary}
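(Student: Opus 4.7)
The plan is to derive both parts as immediate consequences of Proposition~\ref{prop:resultantGH}, since almost all the real work has already been done there.

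For part (a), I will unpack the definition of $\Rat_d^\mnondeg$ into separate polynomial conditions on $(\bfa,\bfb)$. Being in $\Rat_d$ means $\Resultant(F_\bfa,F_\bfb)\ne 0$; requiring $F_\bfb(z,1)$ to have degree $d$ means $b_0\ne 0$; requiring that no root $\b_i$ of $F_\bfb$ vanishes means $b_d\ne 0$; and the condition ``$\b_i\ne\b_j\Rightarrow\b_i^m\ne\b_j^m$'' is, given $b_d\ne 0$, equivalent via Proposition~\ref{prop:resultantGH}(h) to the non-vanishing of $\Disc(H_{\bfb,m})/\Disc(F_\bfb)$, since the $b_d\ne 0$ hypothesis rules out case~(i) of that statement. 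By Proposition~\ref{prop:resultantGH}(g), this quotient genuinely lies in $\ZZ[\bfb]$. Hence $\Rat_d^\mnondeg$ is the locus in $\PP^{2d+1}$ on which the single product
\[
   b_0 \cdot b_d \cdot \Resultant(F_\bfa,F_\bfb) \cdot \frac{\Disc(H_{\bfb,m})}{\Disc(F_\bfb)}
\]
is nonzero, exhibiting it as the complement of a hypersurface.

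For part (b), I will apply the remark following Proposition~\ref{prop:resultantGH}, which says that for $\f_{\bfa,\bfb}\in\Rat_d$ the equality $\deg_z\gF_{m,k}(\f_{\bfa,\bfb})=d$ holds precisely when $\Resultant(G_{\bfa,\bfb,m,k},H_{\bfb,m})\ne 0$ (interpreting both polynomials as having $z$-degree~$d$). Substituting the factorization supplied by Proposition~\ref{prop:resultantGH}(f), this resultant equals, up to sign,
\[
   b_0^{m-1}\,b_d^{m-1-k}\,\Resultant(F_\bfa,F_\bfb)\,\frac{\Disc(H_{\bfb,m})}{\Disc(F_\bfb)}.
\]
Each of the four factors is nonzero for $\f_{\bfa,\bfb}\in\Rat_d^\mnondeg$, by exactly the four conditions assembled in part~(a), so the product is nonzero and the conclusion follows.

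There is no serious obstacle here, since Proposition~\ref{prop:resultantGH} is tailor-made for this application; the only small point to watch is the edge case $k=m-1$, where the exponent $m-1-k$ collapses to $0$ and the $b_d$ factor disappears from the resultant formula. This does not cause any difficulty: the remaining three factors are still nonzero under the $m$-nondegeneracy hypothesis, so the conclusion $\deg_z\gF_{m,k}(\f_{\bfa,\bfb})=d$ persists.
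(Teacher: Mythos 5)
Your proof is correct and takes essentially the same approach as the paper: both parts are read off from Proposition~\ref{prop:resultantGH}, using (e), (g), (h) to characterize $m$-nondegeneracy as the nonvanishing of a polynomial in $\ZZ[\bfb]$, and (f) to reduce the degree-preservation question to the nonvanishing of the resultant factorization. One minor point where you are actually slightly more careful than the paper: the paper's proof of (a) describes $\Rat_d^\mnondeg$ as the complement of the hypersurface $b_0 b_d\,\Disc(H_{\bfb,m})/\Disc(F_\bfb)=0$, leaving implicit the factor $\Resultant(F_\bfa,F_\bfb)$ that cuts out $\Rat_d$ itself inside $\PP^{2d+1}$; you include that factor explicitly in your product, which is the right thing to do since $\Rat_d^\mnondeg$ is by definition a subset of $\Rat_d$. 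Your closing remark about $k=m-1$ is harmless but slightly misdirected: since $b_d\ne0$ is part of $m$-nondegeneracy, the $b_d^{m-1-k}$ factor causes no trouble for any $k$; the paper's comment on this case is instead pointing out that the converse fails, i.e., for $k=m-1$ there exist maps \emph{outside} $\Rat_d^\mnondeg$ (with $b_d=0$) whose Landen transform still has degree $d$.
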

\begin{proof}
(a) Indeed, we see from Proposition~\ref{prop:resultantGH}(e,g,h)
that~$F(z)=F_\bfb(z)$ is $m$-nondegenerate if and only if $b_0b_d\ne0$
and~$\Disc(H_{\bfb,m})/\Disc(F_\bfb)\ne0$.  But
Proposition~\ref{prop:resultantGH}(g) tells us
that~$\Disc(H_{\bfb,m})/\Disc(F_\bfb)\in\ZZ[\bfb]$,
so~$\Rat_d^\mnondeg$ is the complement of the hypersurface
defined by 
\[
  b_0b_d\Disc(H_{\bfb,m})/\Disc(F_\bfb)=0.
\]
\par\noindent(b)\enspace
This is immediate from Proposition~\ref{prop:resultantGH}(f,h) and the
definition of $m$-nondegeneratcy. We note that the nondegeneracy
includes the condition that~$b_0b_d\ne0$, which is needed due to the
$b_0^{m-1}b_d^{m-1-k}$ factor in~\eqref{eqn:resGHxi}, although for
$k=m-1$, there are maps with $b_d=0$ satisfying
$\deg_z\bigl(\gF_{m,k}(\f_{\bfa,\bfb}) \bigr) = d$.
\end{proof}

\subsection{Proof of Theorem~\textup{\ref{theorem:RdmkFabmk}(a,b)}}
Proposition~\ref{prop:resultantGH} tells us that
$G_{\bfa,\bfb,m,k}(z)$ and $H_{\bfb,m}(z)$ are of degree at most~$d$
in~$z$, and that their $z$-coefficients are homogeneous polynomials of
degree~$m$ in~$\ZZ[\bfa,\bfb]$. We write
\[
  G_{\bfa,\bfb,m,k}(z) = \sum_{i=0}^d G_{m,k,i}(\bfa,\bfb)z^{d-i}
  \quad\text{and}\quad
  H_{\bfb,m}(z) = \sum_{i=0}^d H_{m,i}(\bfb)z^{d-i}.
\]
More precisely, Proposition~\ref{prop:resultantGH}(d) tells us that
\begin{align*}
  H_{m,0} &= (-1)^{d(m+1)} b_0^m,\\
  G_{m,0,0} &= (-1)^{d(m+1)} a_0b_0^{m-1},\\
  G_{m,k,0} &= 0\quad\text{for $1\le k<m$.}
\end{align*}
Further, the formula
$\gF_{m,k}(\f_{\bfa,\bfb})(z)=G_{\bfa,\bfb,m,k}(z)/H_{\bfb,m}(z)$ in
Proposition~\ref{prop:resultantGH}(b) implies that the rational 
map~$\gR_{d,m,k}:\PP^{2d+1}\dashrightarrow\PP^{2d+1}$ is given by
\begin{equation}
  \label{eqn:FmkeqGmk0Hmd}
  \gR_{d,m,k} = [G_{m,k,0},\ldots,G_{m,k,d},H_{m,0},\ldots,H_{m,d}].
\end{equation}
Hence~$\gR_{d,m,k}$ is a rational map of degree at most~$m$.  We are
next going to show that
\[
  G_{m,k,0},\ldots,G_{m,k,d},H_{m,0},\ldots,H_{m,d}
\]
have no nontrivial common factor in the polynomial
ring~$\ZZ[\bfa,\bfb]$, which will complete the proof that $\gR_{d,m,k}$
has degree exactly equal to~$m$.
\par
Let
\[
  W = \left\{[\bfa,\bfb]\in\PP^{2d+1} : 
    \begin{array}{c}
      G_{m,k,0}(\bfa,\bfb) = \cdots = G_{m,k,d}(\bfa,\bfb) = 0\\
      H_{m,0}(\bfb) = \cdots = H_{m,d}(\bfb) = 0\\
    \end{array}
  \right\},
\]
so~\eqref{eqn:FmkeqGmk0Hmd} tells us that~$Z(\gR_{d,m,k})\subset W$.  We
first observe that if~$\bfb=0$, then the formulas
for~$G_{\bfa,\bfb,m,k}(z)$ and~$H_{\bfb,m}(z)$ given in
Proposition~\ref{prop:resultantGH} imply that ~$G_{\bfa,\bfb,m,k}(z)$
and~$H_{\bfb,m}(z)$ are both identically~$0$, which shows
that~$\{\bfb=\bfzero\}\subset W$.  Conversely, let $[\bfa,\bfb]\in
W$. In particular, every coefficient of~$H_{\bfb,m}(z)$ vanishes.
Since $H_{\bfb,m}(w^m)=\prod F_\bfb(\z_m^tw)$ by definition, it
follows that there is some~$t$ such that~$F_\bfb(\z_m^tw)=0$ as a
polynomial in~$w$, which in turn implies that~$\bfb=\bfzero$.  This
gives the other inclusion, so we have proven that
\[
  W = \bigl\{[\bfa,\bfb]\in\PP^{2d+1} : \bfb=\bfzero\bigr\}.
\]
\par
Suppose now that $\deg\gR_{d,m,k}<m$. As noted earlier, this implies
that $G_{m,k,0},\ldots,G_{m,k,d},H_{m,0},\ldots,H_{m,d}$ have a
nontrivial common factor~$u(\bfa,\bfb)\in \ZZ[\bfa,\bfb]$. But
then~$W$  contains the subvariety \text{$\{u=0\}$}, which has
dimension~$2d$, contradicting the fact that 
\[
  \dim W=\dim\{\bfb=\bfzero\}=d. 
\]
This shows that~$\deg\gR_{d,m,k}=m$, which completes the proof of
Theorem~\ref{theorem:RdmkFabmk}(a), while simultaneously proving that
\[
  \Zcal(\gR_{d,m,k}) = W = \bigl\{[\bfa,\bfb]\in\PP^{2d+1} : \bfb=\bfzero\bigr\}.
\]
Finally, let $[\bfa,\bfb]\in\PP^{2d+1}\setminus \Zcal(\gR_{d,m,k})$. Then
\begin{align}
  \label{eqn:ZRdmkcomp}
  \gR_{d,m,k}&(\bfa,\bfb)\in\Zcal(\gR_{d,m,k}) \notag\\
  &\quad\Longleftrightarrow\quad
  H_{\bfb,m}(z) = 0~\text{as a $z$-polynomial,} \notag\\
  &\quad\Longleftrightarrow\quad
  H_{\bfb,m}(w^m)
     =\prod_{t=0}^{m-1} F_\bfb(\z_m^tw)=0~\text{as a $w$-polynomial,} \notag\\
  &\quad\Longleftrightarrow\quad
  F_\bfb(\z_m^tw)=0~\text{as a $w$-polynomial, for some $t$,} \notag\\
  &\quad\Longleftrightarrow\quad
  \bfb=\bfzero \notag\\
  &\quad\Longleftrightarrow\quad
  [\bfa,\bfb]\in\Zcal(\gR_{d,m,k}).
\end{align}
This completes the proof of Theorem~\ref{theorem:RdmkFabmk}(b).

\subsection{Computation of a Jacobian matrix}
The proof of the remaining parts of Theorem~\ref{theorem:RdmkFabmk}
is more complicated and requires some preliminary results.

The rational map~$\gR_{d,m,k}(\bfa,\bfb)$ is given by a
list of~$2d+2$ homogeneous polynomials of degree~$m$. We write
\[
  \Jcal_{d,m,k}(\bfa,\bfb) = \operatorname{Jac} \gR_{d,m,k}(\bfa,\bfb)
\]
for the associated Jacobian matrix. We note that
since~$G_{\bfa,\bfb,m,k}$ has degree~$1$ in~$\bfa$ and~$H_{\bfb,m}$ is
independent of~$\bfa$ and~$k$, the matrix~$\Jcal_{d,m,k}$ has block form
\begin{equation}
  \label{eqn:JabA0CD}
  \Jcal_{d,m,k}(\bfa,\bfb)
  = \begin{pmatrix}
    A_{d,m,k}(\bfb) & \bfzero \\
    C_{d,m,k}(\bfa,\bfb) & D_{d,m}(\bfb) \\
  \end{pmatrix},
\end{equation}
In particular, the Jacobian determinant
\begin{equation}
  \label{eqn:detJdetAdetD}
  \det\Jcal_{d,m,k} = (\det A_{d,m,k}) (\det D_{d,m}) \in \ZZ[\bfb]
\end{equation}
is independent of~$\bfa$. 

\begin{lemma}
\label{lemma:Ddm0mAdm0}
With notation as in~\eqref{eqn:JabA0CD}, in the case that $k=0$ we
have
\[
   D_{d,m} = m A_{d,m,0}.
\]
\end{lemma}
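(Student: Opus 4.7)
The plan is to show the identity $D_{d,m}=mA_{d,m,0}$ at the level of the defining expressions~\eqref{eqn:Gabnkwm} and~\eqref{eqn:Hbmwm}, by observing that differentiating $H_{\bfb,m}(w^m)$ with respect to $b_j$ and differentiating $G_{\bfa,\bfb,m,0}(w^m)$ with respect to $a_j$ produce essentially the same sum, up to the scalar $1/m$ built into the definition of $G_{\bfa,\bfb,m,0}$.

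First, I would unpack what each matrix is. By the block decomposition~\eqref{eqn:JabA0CD}, the $(i,j)$-entry of $A_{d,m,0}(\bfb)$ is $\partial G_{m,0,i}/\partial a_j$ and the $(i,j)$-entry of $D_{d,m}(\bfb)$ is $\partial H_{m,i}/\partial b_j$. Note that Proposition~\ref{prop:resultantGH}(c)(ii) says $G_{m,0,i}$ is linear in $\bfa$, so $\partial G_{m,0,i}/\partial a_j$ is indeed a function of $\bfb$ alone, and the reduction is really to a single polynomial identity.

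Next, I would apply the product rule to~\eqref{eqn:Hbmwm}: since $F_{\bfb}(z)=\sum_i b_iz^{d-i}$, we have $\partial F_{\bfb}(\z_m^t w)/\partial b_j=(\z_m^t w)^{d-j}$, and therefore
\[
\frac{\partial H_{\bfb,m}(w^m)}{\partial b_j}
= \sum_{t=0}^{m-1}(\z_m^t w)^{d-j}\prod_{s\ne t}F_{\bfb}(\z_m^s w).
\]
On the other hand, taking $k=0$ in~\eqref{eqn:Gabnkwm} and differentiating with respect to $a_j$ (where $F_\bfa$ depends linearly on $\bfa$) yields
\[
\frac{\partial G_{\bfa,\bfb,m,0}(w^m)}{\partial a_j}
= \frac{1}{m}\sum_{t=0}^{m-1}(\z_m^t w)^{d-j}\prod_{s\ne t}F_{\bfb}(\z_m^s w).
\]
So the two expressions agree up to the factor $m$, giving the identity $\partial H_{\bfb,m}(w^m)/\partial b_j=m\,\partial G_{\bfa,\bfb,m,0}(w^m)/\partial a_j$ in $\ZZ[\z_m][\bfa,\bfb,w]$.

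Finally, since both $H_{\bfb,m}(z)$ and $G_{\bfa,\bfb,m,0}(z)$ are polynomials in $z=w^m$ by Proposition~\ref{prop:resultantGH}(a), substituting $z=w^m$ in the above identity and equating coefficients of each power of $z$ gives $\partial H_{m,i}/\partial b_j=m\,\partial G_{m,0,i}/\partial a_j$ for all $0\le i,j\le d$, which is precisely $D_{d,m}=mA_{d,m,0}$. I do not expect any real obstacle here beyond being careful that the substitution $z=w^m$ is legitimate at the level of coefficients; that is already guaranteed by part~(a) of Proposition~\ref{prop:resultantGH}. The essential point is that $G_{\bfa,\bfb,m,0}(w^m)$ is, by construction, $1/m$ times the formal logarithmic derivative of $H_{\bfb,m}(w^m)$ contracted against $F_\bfa$, so differentiating in $\bfa$ and in $\bfb$ simply swaps the roles of $a_i$ and $b_i$.
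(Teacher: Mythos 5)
Your proof is correct and follows essentially the same route as the paper's: differentiate the defining formulas~\eqref{eqn:Gabnkwm} (with $k=0$) and~\eqref{eqn:Hbmwm} in $a$ and $b$ respectively, observe the resulting sums are identical up to the prefactor $1/m$, and then pass to $z$-coefficients. The only cosmetic difference is a transposed indexing convention for the Jacobian, which does not affect the argument.
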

\begin{proof}
With our usual identification of~$z=w^m$, we have by definition
that the $(i,j)$'th entry of~$A_{d,m,0}$ is the coefficient of~$z^{d-j}$
in the partial derivative
\begin{align*}
  \frac{\partial G_{\bfa,\bfb,d,m,0}(z)}{\partial a_i}
  &= \frac{\partial\hfill}{\partial a_i} \left(
      \frac{1}{m}\sum_{t=0}^{m-1} F_\bfa(\z_m^t w)
           \prod_{s\ne t} F_\bfb(\z_m^s w) \right) \\
  &= \frac{1}{m} \sum_{t=0}^{m-1} (\z_m^tw)^{d-i} \prod_{s\ne t} F_\bfb(\z_m^sw).
\end{align*}
Similarly, the $(i,j)$'th entry of~$D_{d,m}$ is the coefficient
of~$z^{d-j}$ in the partial derivative
\begin{align*}
  \frac{\partial H_{\bfb,d,m}(z)}{\partial b_i} 
  &= \frac{\partial\hfill}{\partial b_i} \left(
             \prod_{t=0}^{m-1} F_\bfb(\z_m^t w) \right) \\
  &=  \sum_{t=0}^{m-1} 
      (\z_m^tw)^{d-i} \prod_{s\ne t} F_\bfb(\z_m^sw).
\end{align*}
Comparing these formulas shows that $D_{d,m} = m A_{d,m,0}$.
\end{proof}

\begin{example}
We illustrate Lemma~\ref{lemma:Ddm0mAdm0} and the block
form~\eqref{eqn:JabA0CD} of~$\Jcal_{d,m,k}$ by computing
\[
  \Jcal_{2,2,0}
  = \left(\begin{array}{c|c}
     \begin{matrix}
        b_0&b_2&0\\
        0&-b_1&0\\
        0&b_0&b_2\\
     \end{matrix}
     &
     \begin{matrix}
        \hspace{10pt}0_{\phantom0}\hspace{10pt}&0&
        \hspace{10pt}0_{\phantom0}\hspace{10pt}\\
        0_{\phantom0}&0&0\\
        0_{\phantom0}&0&0\\
     \end{matrix}
     \\ \hline
     \begin{matrix}
        a_0&a_2&0\\
        0&-a_1&0\\
        0&a_0&a_2\\
     \end{matrix}
     &
     \begin{matrix}
        2b_0&2b_2&0\\
        0&-2b_1&0\\
        0&2b_0&2b_2\\
     \end{matrix}
     \\
  \end{array}\right)
\]
\end{example}

The next lemma, which includes a somewhat complicated
calculation, is the key to showing that the matrix~$A_{d,m,0}$ is
generically non-singular in all characteristics.

\begin{lemma}
\label{lemma:degwtadmkij}
Write the entries of the matrix~$A_{d,m,k}(\bfb)$ as
\[
  A_{d,m,k}(\bfb) = \bigl( \a_{d,m,k}(\bfb)_{i,j} \bigr) _{0\le i,j\le d}.
\]
\begin{parts}
\Part{(a)}
Then $\a_{d,m,k}(\bfb)_{i,j}\in\ZZ[\bfb]$ is homogeneous for both
degree and weight, and satisfies
\begin{align*}
  \deg \a_{d,m,k}(\bfb)_{i,j} &= m-1, \\
  \wt \a_{d,m,k}(\bfb)_{i,j} &= mj - i - k.
\end{align*}
\Part{(b)}
$\det A_{d,m,k}(\bfb)\in\ZZ[\bfb]$ is homogeneous for both degree
and weight, and satisfies
\begin{align*}
  \deg\det A_{d,m,k} &= (m-1)(d+1),\\
  \wt\det A_{d,m,k} &= \frac12 (m-1)(d^2+d) - k(d+1).
\end{align*}
\Part{(c)}
Let $I_j\subset\ZZ[\bfb]$ be the ideal generated by~$b_0,b_1,\ldots,b_{j-1}$, 
where by convention we set~$I_0=(0)$.
Then
\[
   \a_{d,m,0}(\bfb)_{i,j} \equiv 0 \pmod{I_j}
  \quad\text{for all $i>j$,}
\]
and for~$i=j$, we have
\[
   \a_{d,m,0}(\bfb)_{j,j} \equiv (-1)^{(m+1)(d-j)} b_j^{m-1} \pmod{I_j}
\]
\Part{(d)}
When $\det A_{d,m,0}(\bfb)$ is written as a polynomial in~$\ZZ[\bfb]$,
it includes the monomial
\[
  (-1)^{(m+1)(d^2+d)/2}(b_0b_1\cdots b_d)^{m-1}.
\]
\end{parts}
\textup(Note that \textup{(c)} and \textup{(d)} refer to the case
that $k=0$. For $1\le k<m$, see Lemma~$\ref{lemma:Aprimekge0}$.\textup)
\end{lemma}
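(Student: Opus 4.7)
My plan is to prove the four parts in order, with (a) and (b) following from the homogeneity of $G_{\bfa,\bfb,m,k}$ established in Proposition~\ref{prop:resultantGH}(c), part (c) by a direct reduction modulo $I_j$, and part (d) by an inductive Leibniz argument powered by part (c).

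For part (a), I would start from the formula obtained by applying $\partial_{a_i}$ to~\eqref{eqn:Gabnkwm}:
\[
  \partial_{a_i} G_{\bfa,\bfb,m,k}(w^m)
    = \frac{w^{d-i-k}}{m} \sum_{t=0}^{m-1} \z_m^{(d-i-k)t}
      \prod_{s \ne t} F_\bfb(\z_m^s w).
\]
The $\bfb$-degree $m-1$ is plain on the right-hand side, and the weight assertion follows from Proposition~\ref{prop:resultantGH}(c)(iii): since $\wt G_{\bfa,\bfb,m,k} = md - k$, differentiating in $a_i$ lowers the weight by $i$, and extracting the $z^{d-j}$ coefficient removes another $m(d-j)$, netting $mj - i - k$. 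Part (b) is then a one-line Leibniz count: every product $\prod_{j=0}^d \alpha_{\sigma(j),j}$ has $\bfb$-degree $(d+1)(m-1)$ and weight $\sum_j (mj - \sigma(j) - k) = \tfrac{1}{2}(m-1)d(d+1) - k(d+1)$, independent of the permutation $\sigma$.

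For part (c), I would work modulo $I_j = (b_0,\ldots,b_{j-1})$. Modulo $I_j$, the polynomial $F_\bfb(u)$ collapses to $\sum_{l \ge j} b_l u^{d-l}$, so each $F_\bfb(\z_m^s w)$ has $w$-degree at most $d-j$ with leading term $b_j(\z_m^s w)^{d-j}$. Thus $w^{d-i}\prod_{s \ne t}F_\bfb(\z_m^s w)$ has $w$-degree at most $(d-i) + (m-1)(d-j)$ modulo $I_j$, and its $w^{m(d-j)}$~coefficient vanishes unless $i \le j$, giving the first assertion. For $i = j$, maximal-degree considerations force one to pick the leading term from every factor; the $\z_m$-bookkeeping (using~\eqref{eqn:prodzmtneg1m1}) then collapses the $t$-sum to $(-1)^{(m+1)(d-j)}b_j^{m-1}$.

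Part (d) is the main work. I would expand $\det A_{d,m,0}$ via Leibniz and show that only the identity permutation contributes to the coefficient of $M := (b_0 b_1 \cdots b_d)^{m-1}$. A contribution corresponds to a choice of monomial $b_0^{e^{(j)}_0}\cdots b_d^{e^{(j)}_d}$ from each $\alpha_{\sigma(j),j}$ with $\prod_j b^{e^{(j)}} = M$, so $\sum_j e^{(j)}_l = m-1$ for every $l$. At $j=0$, part~(c) is exact since $I_0 = (0)$: $\alpha_{i,0} = 0$ for $i > 0$ forces $\sigma(0) = 0$, and $\alpha_{0,0} = (-1)^{(m+1)d}b_0^{m-1}$ is a unique monomial, pinning $e^{(0)}_0 = m-1$ and saturating the total $b_0$-count of $M$, so $e^{(j')}_0 = 0$ for all $j' \ge 1$. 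Inductively, once $\sigma(i) = i$ and $e^{(i)}_l = (m-1)\delta_{il}$ are pinned for all $i < j$, the $b_l$-counts for $l < j$ are saturated, forcing $e^{(j')}_l = 0$ for $j' \ge j$ and $l < j$. If $\sigma(j) > j$, part~(c) places $\alpha_{\sigma(j),j}$ in $I_j$, forcing some $e^{(j)}_l > 0$ with $l < j$ — a contradiction. Hence $\sigma(j) = j$, and the degree, weight, and vanishing-for-$l<j$ constraints then uniquely force the monomial from $\alpha_{j,j}$ to be $b_j^{m-1}$, with coefficient $(-1)^{(m+1)(d-j)}$ by part~(c). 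Multiplying the signs over $j$ yields $\prod_{j=0}^d (-1)^{(m+1)(d-j)} = (-1)^{(m+1)d(d+1)/2}$. The delicate point is verifying that each induction step is truly forced with no residual freedom for a non-identity contribution to slip in; the combination in (c) of triangular vanishing below the diagonal with an explicit leading diagonal value, applied to the maximally balanced monomial $M$, is what closes the induction tightly.
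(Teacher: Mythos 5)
Your proposal is correct and follows essentially the same route as the paper: the degree/weight bookkeeping for (a)--(b) from the explicit formula for $\partial G_{\bfa,\bfb,m,k}/\partial a_i$, a mod-$I_j$ argument for the triangular structure in (c), and a forced-choice argument powered by (c) to extract the monomial $(b_0\cdots b_d)^{m-1}$ and its sign in (d). The only cosmetic differences are that you establish (c) by a direct $w$-degree estimate rather than the paper's weight inequality derived from (a), and you phrase (d) via the Leibniz expansion and an induction on $\sigma(j)=j$ rather than the paper's equivalent successive column-cofactor expansion.
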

\begin{proof}
The $(i,j)$'th entry of $A_{d,m,k}$ is equal to the coefficient
of~$z^{d-j}$ in the partial derivative
\begin{equation}
  \label{eqn:dGabdmkzai}
  \frac{\partial G_{\bfa,\bfb,d,m,k}(z)}{\partial a_i}
  = \frac{1}{mw^k} \sum_{t=0}^{m-1} 
     \z_m^{-kt} (\z_m^tw)^{d-i} \prod_{s\ne t} F_\bfb(\z_m^sw).
\end{equation}
A typical monomial in the right-hand side of~\eqref{eqn:dGabdmkzai} is
a $\ZZ[\z_m]$-multiple of
\[
  w^{-k}w^{d-i} b_{u_1}w^{d-u_1} \cdots b_{u_{m-1}}w^{d-u_{m-1}}
  = b_{u_1}\cdots b_{u_{m-1}} w^{md-i-k - u_1-\cdots-u_{m-1}}.
\]
This quantity will be a multiple of~$z^{d-j}=w^{m(d-j)}$ if
and only if
\[
  md - i - k - (u_1+\cdots+u_{m-1}) = m(d-j).
\]
Hence $\a_{d,m,k}(\bfb)_{i,j}$ is a sum of monomials
whose  $\bfb$-degree is~$m-1$ and whose~$\bfb$-weight is
\[
  \wt(b_{u_1}\cdots b_{u_{m-1}}) 
  = u_1+\cdots+u_{m-1}
  = mj - i - k.
\]
This completes the proof of~(a).
\par
For~(b), we see that each monomial in~$\det A_{d,m,k}$ is a product
of~$d+1$ homogeneous polynomials of degree~$m-1$, which shows
that~$\det A_{d,m,k}$ is homogeneous of degree~$(m-1)(d+1)$.
Further, if~$\pi\in\Scal_{d+1}$ is any permuatation, then~$\det
A_{d,m,k}$ is a linear combination of terms having weight
\begin{align*}
  \wt \left( \prod_{i=0}^d \a_{d,m,k}(\bfb)_{i,\pi(i)} \right)
  &= \sum_{i=0}^d \wt \a_{d,m,k}(\bfb)_{i,\pi(i)}  \\
  &= \sum_{i=0}^d \bigl( m\pi(i) - i - k  \bigr) \\
  &=  (m-1) d(d+1)/2 - k(d+1).
\end{align*}
This completes the proof of~(b).
\par
The weight and degree formulas from~(a) say
that~$\a_{d,m,0}(\bfb)_{i,j}$ is a linear combination of terms of the
form
\[
  b_0^{e_0}b_1^{e_1}\cdots b_d^{e_d}
  \quad\text{with}\quad
  \sum_{t=0}^d e_t = m-1 \quad\text{and}\quad \sum_{t=0}^d te_t = mj-i.
\]
Suppose that 
\[
  \a_{d,m,0}(\bfb)_{i,j} \not\equiv 0 \pmod{I_j}.
\]
This means that~$\a_{d,m,0}(\bfb)_{i,j}$ includes a monomial
having~$e_0=\cdots=e_{j-1}=0$, i.e., a monomial of the form
\[
  b_j^{e_j}\cdots b_d^{e_d}
  \quad\text{with}\quad
  \sum_{t=j}^d e_t = m-1 \quad\text{and}\quad \sum_{t=j}^d te_t = mj-i.
\]
This leads to the inequality
\begin{equation}
  \label{eqn:mjigejm1}
  mj-i = \sum_{t=j}^d te_t
  \ge \sum_{t=j}^d je_t
  = j(m-1),
\end{equation}
which implies that $i\le j$. We have thus shown that
\begin{equation}
  \label{eqn:adm0bijne0iffilej}
  \a_{d,m,0}(\bfb)_{i,j} \not\equiv 0 \pmod{I_j}
  \quad\Longrightarrow\quad
  i \le j.
\end{equation}
The contrapositive of~\eqref{eqn:adm0bijne0iffilej} is the first part
of~(c).
\par
For the second part, we suppose that
\[
  \a_{d,m,0}(\bfb)_{i,j} \not\equiv 0 \pmod{I_j}
  \quad\text{and}\quad
  i=j.
\]
Then~$mj-i=j(m-1)$, so the middle inequality in~\eqref{eqn:mjigejm1}
is an equality. Hence
\[
  0 = \sum_{t=j}^d te_t - \sum_{t=j}^d je_t = \sum_{t=j}^d (t-j)e_t .
\]
Every term $(t-j)e_t$ is non-negative, so we must have $(t-j)e_t=0$
for all $j\le t\le d$, which implies that $e_t=0$ for all
$j+1\le t\le d$. It follows that the only monomial appearing
in~$\a_{d,m,0}(\bfb)_{i,j}$ that is not in~$I_j$ has the
form~$b_j^{e_j}$, and by degree considerations we must
have~$e_j=m-1$. This proves that
\[
  \a_{d,m,0}(\bfb)_{j,j} \equiv \g b_j^{m-1} \pmod{I_j}
  \quad\text{for some constant $\g\in\ZZ[\z_m]$.}
\]
In order to complete the proof of the second part of~(c), it remains
to compute the constant~$\g$.
\par
We are looking for the coefficient of $b_j^{m-1}w^{m(d-j)}$ in the
expression (note that $k=0$ by assumption)
\[
  \frac{1}{m} \sum_{t=0}^{m-1} (\z_m^tw)^{d-j} \prod_{s\ne t} F_\bfb(\z_m^sw).
\]
The only way to get~$b_j^{m-1}$ is to take the $b_j(\z_m^sw)^{d-j}$
term in each~$F_\bfb(\z_m^sw)$ appearing in the product. This gives
\begin{align*}
  \frac{1}{m} \sum_{t=0}^{m-1} &(\z_m^tw)^{d-j} \prod_{s\ne t} b_j(\z_m^sw)^{d-j} \\*
  &= \frac{1}{m} b_j^{m-1} w^{m(d-j)}
         \sum_{t=0}^{m-1} \z_m^{(d-j)t} \prod_{s\ne t} \z_m^{(d-j)s} \\
  &= \frac{1}{m} b_j^{m-1} w^{m(d-j)} \sum_{t=0}^{m-1} (-1)^{(m+1)(d-j)}
       \quad\text{from~\ref{eqn:prodzmtneg1m1},} \\*
  &= (-1)^{(m+1)(d-j)} b_j^{m-1} w^{m(d-j)}.
\end{align*}
This proves that~$\g=(-1)^{(m+1)(d-j)}$, which completes the proof of~(c).
\par
Using~(c), we see that the entries of the matrix~$A_{d,m,k}(\bfb)$
have the form
\[
  \begin{pmatrix}
    \pm b_0^{m-1} & * & * & * & \cdots & * \\
    I_0 & \pm b_1^{m-1}+I_1 & * & * & \cdots & * \\
    I_0 & I_1 & \pm b_2^{m-1}+I_2  & * & \cdots & * \\
    I_0 & I_1 & I_2 & \pm b_3^{m-1}+I_3 & \cdots & * \\
    \vdots &\vdots &\vdots &\vdots & \ddots & \vdots \\
    I_0 & I_1 & I_2 & I_3 & \cdots & \pm b_d^{m-1}+I_d \\
  \end{pmatrix},
\]
where we write~$I_j$ to indicate an element of the ideal~$I_j$, and
where stars indicate arbitrary elements of~$\ZZ[\bfb]$.
\par
We now consider how we might obtain the monomial~$(b_0b_1\ldots
b_d)^{m-1}$ in the expansion of~$\det
A_{d,m,k}(\bfb)$. Since~$I_0=(0)$, the only nonzero entry in the first
column is the top entry of~$\pm b_0^{m-1}$, so we expand on that
element and delete the first row and column. But we've now used up all
of our allowable factors of~$b_0$, so when we take the determinant of
the remaining $d\times d$ submatrix, we're not allowed to use any
monomials containing a~$b_0$. Equivalently, we might as well set
$b_0=0$ before taking the determinant of the $d\times d$
submatrix. When we do this, since $I_1=(b_0)$, the only nonzero entry
in the first column (of the submatrix) is the top entry, which is~$\pm
b_1^{m-1}$. Expanding on this entry and deleting the top row and
column, we've now accumulated a factor of~$(b_0b_1)^{m-1}$, which uses
up all of the allowable factors of~$b_0$ and~$b_1$. This means that we
can set $b_0=b_1=0$ before taking the determinant of the remaining
$(d-1)\times (d-1)$ submatrix. Since $I_2=(b_0,b_1)$, the first column
of the $(d-1)\times (d-1)$ submatrix is zero except for the top entry
of~$\pm b_2^{m-1}$. Continuing in this fashion, we see that the
monomial~$(b_0b_1\ldots b_d)^{m-1}$ appears in the expansion of~$\det
A_{d,m,k}(\bfb)$ with coefficient~$\pm1$.
\par
This would suffice for most purposes, but we can use the explicit
formula for the sign in~(c) to exactly determine the coefficient.
Thus $\det A_{d,m,k}(\bfb)$ contains the
monomial~$(-1)^\mu(b_0b_1\ldots b_d)^{m-1}$ with
\[
  \mu \equiv
  \sum_{j=0}^d (m+1)(d-j) = (m+1)\frac{d^2+d}{2}  \pmod{2}.
\]
This completes the proof of Lemma~\ref{lemma:degwtadmkij}.
\end{proof}

\subsection{Proof of Theorem~\textup{\ref{theorem:RdmkFabmk}(c)}}
Our goal is to prove that the rational map
$\gR_{d,m,0}:\PP^{2d+1}_\ZZ\dashrightarrow\PP^{2d+1}_\ZZ$ is dominant.
We observe that Lemma~\ref{lemma:degwtadmkij}(d) and the elementary
Jacobian formula~\eqref{eqn:detJdetAdetD} imply that the Jacobian
determinant of the rational map~$\gR_{d,m,0}$, considered as a
homogeneous polynomial in~$\ZZ[\bfb]$, includes a monomial of the form
\[
  m^{d+1} (b_0b_1\ldots b_d)^{2m-2}.
\]
It follows that
\[
  \det\Jcal_{d,m,k}(\bfa,\bfb) \ne 0
  \quad\text{in $\ZZ[m^{-1}][\bfb]$.}
\]
This proves that~$\gR_{d,m,0}$ is a dominant rational map provided
that~$m$ is invertible, i.e., as long as we're not working in
characteristic~$p$ for some prime~$p$ dividing~$m$. In particular, it
proves that~$\gR_{d,m,0}$ is a dominant rational self-map
of~$\PP^{2d+1}_\QQ$.
\par
However, for characteristics~$p$ dividing~$m$, this tangent space
argument will not work. Indeed,  we will soon see that the
map~$\gR_{d,m,0}$ is inseparable over~$\FF_p$.  So we proceed as
follows.
Theorem~\ref{theorem:RdmkFabmk}(b) says that the indeterminacy locus
of~$\gR_{d,m,k}$ is the set
\[
  \Zcal = \Zcal(\gR_{d,m,k}) 
   = \bigl\{ [\bfa,\bfb]\in\PP^{2d+1} : \bfb=\bfzero \bigr\},
\]
and that~$\gR_{d,m,k}$ induces a \emph{morphism}
\[
  \gR_{d,m,k} : \PP^{2d+1}_\ZZ \setminus \Zcal
   \longrightarrow \PP^{2d+1}_\ZZ \setminus \Zcal.
\]
We note that~$\Zcal$ is independent of~$m$ and~$k$, so
compositions of various~$\gR_{d,m,k}$ for a fixed~$d$ and
different~$m$ and~$k$ give well-defined rational self-maps
of~$\PP^{2d+1}_\ZZ$, since they are self-morphisms of the Zariski
dense subset $\PP^{2d+1}_\ZZ \setminus \Zcal$.  Contained within this
set is the Zariski dense set on which~$\gR_{d,m,k}$ agrees with the
Landen transform~$\gF_{m,k}$
(cf.\ Corollary~\ref{theorem:RmkdonRatd}(a)), so the composition formula
in Proposition~\ref{prop:compositionlaw} implies the analogous formula
\begin{equation}
  \label{eqn:RdmkRdnl}
  \gR_{d,m,k} \circ \gR_{d,n,\ell} = \gR_{d,mn,kn+\ell}.
\end{equation}
The composition formula~\eqref{eqn:RdmkRdnl} is valid as rational
self-maps of~$\PP^{2d+1}_\ZZ$. Taking~$k=\ell=0$
in~\eqref{eqn:RdmkRdnl} and applying it repeatedly, we see that if~$m$
has a factorization $m=p_1p_2\cdots p_r$ as a product of (not
necessarily distinct) primes, then
\begin{equation}
  \label{eqn:RdmprodRdpi}
  \gR_{d,m,0} = \gR_{d,p_1,0} \circ \gR_{d,p_2,0} \circ \cdots \circ \gR_{d,p_r,0}.
\end{equation}
Hence in order to prove that~$\gR_{d,m,0}$ is a dominant rational self-map
of~$\PP^{2d+1}_\ZZ$, it suffices to consider the case that~$m=p$ is prime.
Further, since we have already proven that~$\gR_{d,p,0}$ 
is dominant over~$\ZZ[p^{-1}]$, it suffices to prove that the reduction
modulo~$p$,
\[
  \tilde\gR_{d,p,0} : \PP^{2d+1}_{\FF_p} \dashrightarrow \PP^{2d+1}_{\FF_p},
\]
is dominant.
\par

In order to analyze~$\tilde\gR_{d,p,0}$, it is convenient to
write~$\FF_p$ as the quotient
field
\[
  \FF_p = \frac{\ZZ[\z_p]}{\gp}
  \quad\text{with $\gp$ the ideal $\gp=(1-\z_p)\ZZ[\z_p]$.}
\]
Then using the fact that~$\z_p\equiv1\pmod{\gp}$, we see that
\[
  H_{\bfb,p}(w^p)
  = \prod_{t=0}^{p-1} F_\bfb(\z_p^tw)
  \equiv F_\bfb(w)^p
  \equiv F_{\bfb^p}(w^p)
  \pmod\gp,
\]
where we write~$\bfb^p$ for the $p$-power Frobenius map applied to the
coordinates of~$\bfb$. This proves that the last~$d+1$ coordinate
functions of~$\tilde\gR_{d,p,0}(\bfa,\bfb)$ are~$b_0^p,\ldots,b_d^p$,
i.e.,~$\tilde\gR_{d,p,0}$ has the form
\[
  \tilde\gR_{d,p,0}(\bfa,\bfb) = [ \tilde G_{\bfa,\bfb,p,0},
        b_0^p, b_1^p, \ldots, b_d^p],
\]
where we write~$\tilde G_{\bfa,\bfb,p,0}$ for the list of
$z$-coefficients of~$G_{\bfa,\bfb,p,0}(z)$, reduced modulo~$\gp$.
\par
Tracking through the various definitions and using the fact that the
polynomial~$G_{\bfa,\bfb,p,k}(z)$ is homogenerous of degree~$1$
in~$\bfa$, we see that~$G_{\bfa,\bfb,p,k}(z)$ and the
matrix~$A_{d,p,k}(\bfb)=\bigl(\a_{d,p,k}(\bfb)\bigr)$ are related by
the formula
\begin{equation}
  \label{eqn:Gabmkzsumaiadmkbij}
  G_{\bfa,\bfb,p,k}(z) = \sum_{j=0}^d \left(
     \sum_{i=0}^d a_i \a_{d,p,k}(\bfb)_{i,j} \right) z^{d-j}.
\end{equation}
\par
Let
\[
  K = \overline{\FF_p(u_0,\ldots,u_d,v_0,\ldots,v_d)},
\]
where~$u_0,\ldots,u_d,v_0,\ldots,v_d$ are algebraically independent
over~$\FF_p$, and the overline denotes an algebraic closure of the
indicated rational function field. We are going to show that 
\[
  [u_0,\ldots,u_d,v_0,\ldots,v_d] 
   \in \tilde\gR_{d,p,0}\bigl(\PP^{2d+1}(K)\setminus\Zcal\bigr).
\]
This will show
that~$\tilde\gR_{d,p,0}:\PP^{2d+1}_{\FF_p}\dashrightarrow\PP^{2d+1}_{\FF_p}$
is generically surjective, and hence that~$\tilde\gR_{d,p,0}$ is
dominant.  \par To show that~$[u_0,\ldots,v_d]$ is in the image
of~$\tilde\gR_{d,p,0}$, we first let
\[
  w_i=v_i^{1/p}\in K\quad\text{for $0\le i\le d$,}
\]
so in particular,~$w_0,\ldots,w_d$ are also algebraically independent
over~$\FF_p$. It follows from Lemma~\ref{lemma:degwtadmkij}(d) that
the matrix~$A_{d,p,0}(\bfw)$ is non-singular, i.e., $\det
A_{d,p,0}(\bfw)\ne0$ in~$K$.  (Note how we use here the fact
that $\det A_{d,p,0}(\bfb)$ includes a monomial whose coefficient
is~$\pm1$, and hence a monomial that persists when we reduce
modulo~$p$.)  This allows us to define a vector
\[
  \bfx = A_{d,p,0}(\bfw)^{-1} \bfu \in K^{d+1}.
\]
Using~\eqref{eqn:Gabmkzsumaiadmkbij}, which says that the
coefficients of~$G_{\bfa,\bfb,p,k}(z)$ are linear functions of~$\bfa$
whose coefficients form  the matrix~$A_{d,p,k}(\bfb)$, we see that
\[
  \tilde\gR_{d,p,0}(\bfx,\bfw) = [\bfu,\bfv].
\]
Hence $\tilde\gR_{d,p,0}$ is dominant, which completes the proof
of Theorem~\ref{theorem:RdmkFabmk}(c).

\begin{remark}
Theorem~\ref{theorem:RdmkFabmk} describes the algebraic degree of the
rational map~$\gR_{d,m,k}$, where in general, the algebraic degree of
a rational map~$\f:\PP^N\dashrightarrow\PP^N$ of projective space is the
integer~$d$ satisfying $\f^*\Ocal_{\PP^N}(1)=\Ocal_{\PP^N}(d)$. It is
also of interest to compute the separable and inseparable degrees of 
dominant rational maps $\f:X\dashrightarrow Y$ of equidimensional
varieties. By definition, these are the separable and inseparable
degrees of the associated extension $K(X)/\f^*K(Y)$ of function
fields. (Over~$\CC$, the separable degree is equal to the topological
degree of~$\f$, i.e.,~$\#\f^{-1}(y)$ for a generic point~$y\in
Y(\CC)$.)  The proof of Theorem~\ref{theorem:RdmkFabmk}(c) shows that
in characteristic~$0$, the induced map
$\gR_{d,m,0}:\PP^{2d+1}_\QQ\dashrightarrow\PP^{2d+1}_\QQ$ has (separable)
degree~$m^d$, while in characteristic~$p$,  we factor $m=p^en$ with
$p\nmid n$, and then $\gR_{d,m,0}:\PP^{2d+1}_{\FF_p}\dashrightarrow\PP^{2d+1}_{\FF_p}$
has separable degree~$n^d$ and inseparable degree~$p^{ed}$.  The same
statements are true for~$\gR_{d,m,k}$ as self-maps
of~$\{a_0=0\}\cong\PP^{2d}$.
\end{remark}

\subsection{Proof of Theorem~\textup{\ref{theorem:RdmkFabmk}(d)}}
The proof of~(d) is similar to~(c), but longer and computationally
more complicated, so we only give an outline and leave the details
to the reader.
\par
We use a prime to denote restriction to the hyperplane~$\{a_0=0\}$
in~$\PP^{2d+1}$. So for example~$\gR_{d,m,k}'$ is the restriction
of~$\gR_{d,m,k}$ to~$\{a_0=0\}$, and~$\Jcal_{d,m,k}'(\bfa,\bfb)$ is
the Jacobian matrix of~$\gR_{d,m,k}'$. We observe
that~$\Jcal_{d,m,k}'(\bfa,\bfb)$ is obtained
from~$\Jcal_{d,m,k}(\bfa,\bfb)$ by deleting the first column and the
first row of~$\Jcal_{d,m,k}(\bfa,\bfb)$, and then setting~$a_0=0$.
Looking at the block form~\eqref{eqn:JabA0CD}
of~~$\Jcal_{d,m,k}(\bfa,\bfb)$, we see
that~$\Jcal_{d,m,k}'(\bfa,\bfb)$ has the form
\begin{equation}
  \label{eqn:JabA0CDprime}
  \Jcal_{d,m,k}'(\bfa,\bfb)
  = \begin{pmatrix}
    A_{d,m,k}'(\bfb) & \bfzero \\
    C_{d,m,k}'(\bfa,\bfb) & D_{d,m}(\bfb) \\
  \end{pmatrix},
\end{equation}
where~$A_{d,m,k}'$ is the~$d$-by-$d$ matrix obtained by deleting the
first column and row of~$A_{d,m,k}$, and~$D_{d,m}$ is the
\text{$(d+1)$}-by-\text{$(d+1)$} matrix already appearing
in~$\Jcal_{d,m,k}$.
\par
We note that Lemmas~\ref{lemma:Ddm0mAdm0}
and~\ref{lemma:degwtadmkij}(d) tell us that $\det D_{d,m}(\bfb)$
includes the monomial
\begin{equation}
  \label{eqn:monominDdm}
  m^{d+1} (-1)^{(m+1)(d^2+d)/2}(b_0b_1\cdots b_d)^{m-1}.
\end{equation}
\par
We recall Lemma~\ref{lemma:degwtadmkij}(c,d) gives various formulas
when~$k=0$. The next result gives analogous formulas for~$1\le k<m$. 

\begin{lemma}
\label{lemma:Aprimekge0}
Let $1\le k<m$, and write the entries of the matrix~$A_{d,m,k}'$ as
$A_{d,m,k}'(\bfb)=\bigl(\a_{d,m,k}'(\bfb)_{i,j}\bigr)$.
\begin{parts}
\Part{(a)}
For $1\le j\le d$, we write~$I_j'\subset\ZZ[\bfb]$ for the ideal
generated by
\[
  b_0,b_1,\ldots,b_{j-2},b_{j-1}^{k+1}.
\]
Then
\begin{align*}
  \a_{d,m,k}'(\bfb)_{i,j} &\equiv 0 \pmod{I_j'}
    \quad\text{for all $i>j$,} \\
  \a_{d,m,k}'(\bfb)_{j,j} &\equiv  (-1)^{(m+1)(d-j)+k}b_{j-1}^kb_j^{m-1-k}
          \pmod{I_j'}.
\end{align*}
\Part{(b)}
When $\det A_{d,m,k}'(\bfb)$ is written as a polynomial in~$\ZZ[\bfb]$,
it includes the monomial
\[
  (-1)^{(m+1)(d^2-d)/2+dk} b_0^k(b_1b_2\cdots b_{d-1})^{m-1}b_d^{m-1-k}.
\]
\end{parts}
\end{lemma}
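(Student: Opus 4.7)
The plan is to mimic the strategy of Lemma~\ref{lemma:degwtadmkij}(c,d), adapted to the general $1\le k<m$ case and to the deleted matrix $A'_{d,m,k}$.

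For part~(a), the entries of $A'_{d,m,k}$ are the entries $\a_{d,m,k}(\bfb)_{i,j}$ for $1\le i,j\le d$, so I can use the formula~\eqref{eqn:dGabdmkzai} together with the homogeneity and weight constraints of Lemma~\ref{lemma:degwtadmkij}(a). A monomial $b_0^{e_0}\cdots b_d^{e_d}$ in $\a_{d,m,k}(\bfb)_{i,j}$ satisfies $\sum_t e_t = m-1$ and $\sum_t t\,e_t = mj-i-k$. For the monomial to lie outside $I_j'=(b_0,\ldots,b_{j-2},b_{j-1}^{k+1})$, I need $e_0=\cdots=e_{j-2}=0$ and $e_{j-1}\le k$. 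Substituting into the weight constraint yields
\[
\sum_{t=j+1}^d (t-j)e_t = e_{j-1}+j-i-k,
\]
whose left-hand side is non-negative, forcing $i\le e_{j-1}+j-k\le j$. This proves the vanishing claim for $i>j$. For $i=j$, the same equation becomes $\sum_{t>j}(t-j)e_t = e_{j-1}-k\le 0$, and non-negativity pins down $e_{j-1}=k$, $e_j=m-1-k$, and $e_t=0$ for $t>j$; so $b_{j-1}^k b_j^{m-1-k}$ is the unique surviving monomial.

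The core new computation is its coefficient. Expanding $F_\bfb(\z_m^s w)=\sum_r b_r\z_m^{s(d-r)}w^{d-r}$ in~\eqref{eqn:dGabdmkzai}, indexing by the subset $S\subseteq\{0,\ldots,m-1\}\setminus\{t\}$ of size $k$ that marks the factors contributing $b_{j-1}$, and collecting $\z_m$-powers (using the identity $\z_m^{(d-j)m(m-1)/2}=(-1)^{(m+1)(d-j)}$), the coefficient becomes
\[
\frac{(-1)^{(m+1)(d-j)}}{m}\sum_{t=0}^{m-1}\z_m^{-kt}\sum_{\substack{S\subseteq\{0,\ldots,m-1\}\setminus\{t\}\\|S|=k}}\z_m^{\sum_{s\in S}s}.
\]
To evaluate the inner sum, I use the identity $\prod_{s=0}^{m-1}(1+x\z_m^s)=1+(-1)^{m+1}x^m$; dividing by $(1+x\z_m^t)$ and solving the resulting linear recursion for the quotient's coefficients gives $\prod_{s\ne t}(1+x\z_m^s)=\sum_{k=0}^{m-1}(-1)^k\z_m^{tk}x^k$. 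Hence the inner sum equals $(-1)^k\z_m^{tk}$, the outer $t$-sum collapses to $(-1)^k m$, and the coefficient simplifies to $(-1)^{(m+1)(d-j)+k}$, as asserted.

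For part~(b), I follow the triangular peeling argument of Lemma~\ref{lemma:degwtadmkij}(d). To isolate the monomial $b_0^k(b_1\cdots b_{d-1})^{m-1}b_d^{m-1-k}$ in $\det A'_{d,m,k}$, work modulo $I_1'=(b_0^{k+1})$: by (a), the only entry of column~$1$ contributing a monomial with fewer than $k+1$ factors of $b_0$ is the top entry $(-1)^{(m+1)(d-1)+k}b_0^k b_1^{m-1-k}$. Expand on it, then set $b_0=0$ and apply the same reasoning to column~$2$ of the resulting submatrix modulo $I_2'=(b_0,b_1^{k+1})$; the only surviving entry is $(-1)^{(m+1)(d-2)+k}b_1^k b_2^{m-1-k}$. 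Iterating through columns $3,\ldots,d$, each time setting $b_0=\cdots=b_{j-2}=0$ and reducing modulo $b_{j-1}^{k+1}$, one concludes that the identity permutation is the unique contribution to the target monomial, with sign $\prod_{j=1}^d(-1)^{(m+1)(d-j)+k}=(-1)^{(m+1)d(d-1)/2+dk}$, matching the claim. The main obstacle is the coefficient computation in (a): unlike the $k=0$ case, the $t$-sum no longer collapses on its own and one must handle the double sum over $(t,S)$ via the generating-function identity above; once that is in hand, both (a) and (b) follow by arguments parallel to those for Lemma~\ref{lemma:degwtadmkij}(c,d).
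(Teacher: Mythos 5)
Your proof is correct and carries out exactly the adaptation of Lemma~\ref{lemma:degwtadmkij}(c,d) that the paper indicates but omits: the degree/weight constraints (now with the extra $-k$) pin down $e_{j-1}=k$, $e_j=m-1-k$ as the unique surviving monomial, and the triangular peeling through the ideals $I'_j$ isolates the diagonal contribution and its sign $(-1)^{(m+1)d(d-1)/2+dk}$. The one genuinely new ingredient relative to the $k=0$ case --- evaluating the double sum over $(t,S)$ for the leading coefficient via the identity $\prod_{s\ne t}(1+x\z_m^s)=\sum_{n=0}^{m-1}(-1)^n\z_m^{tn}x^n$ --- is handled correctly and yields $(-1)^{(m+1)(d-j)+k}$, as claimed.
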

\begin{proof}
We omit the proof of Lemma~\ref{lemma:Aprimekge0}, which is similar to
the proof of Lemma~\ref{lemma:degwtadmkij}(c,d).
\end{proof}

Resuming the proof of Theorem~\ref{theorem:RdmkFabmk}(d), we note
that~\eqref{eqn:JabA0CDprime},~\eqref{eqn:monominDdm}, and
Lemma~\ref{lemma:Aprimekge0}(b) imply that
\[
  \det \Jcal_{d,m,k}(\bfa,\bfb)
  = \det A_{d,m,k}'(\bfb) \det D_{d,m}(\bfb)
\]
has a monomial term of the form
\[
  m^{d+1}(-1)^{(m+1)d^2+dk} b_0^{m-1+k}(b_1b_2\cdots b_{d-1})^{2m-2} b_d^{2m-2-k}.
\]
In particular, we conclude that~$\gR_{d,m,k}'$ is a dominant rational
self-map of~$\PP^{2d}$ over~$\ZZ[m^{-1}]$. 
\par
The next step of the proof is to note that that the restriction
of~$\gR_{d,m,0}$ to~$\PP^{2d}=\{a_0=0\}\subset\PP^{2d+1}$ gives a
map~$\gR_{d,m,0}'$ from~$\{a_0=0\}$ to itself. This follows from the
fact that $G_{\bfa,\bfb,m,0}=\pm a_0b_0^{m-1}z^d+O(z^{d-1})$.  We
claim that~$\gR_{d,m,0}'$ is a dominant rational map over~$\ZZ$.  To
see this, we first perform a Jacobian calculation for~$\gR_{d,m,0}'$
similar to those already done to check that~$\gR_{d,m,0}'$ is a
dominant rational map over~$\ZZ[m^{-1}]$. We then
decompose~$\gR_{d,m,0}'$ as a composition of maps~$\gR_{d,p,0}'$
with~$p$ prime, cf.~\eqref{eqn:RdmprodRdpi}, so it suffices to
show that~$\gR_{d,p,0}'$ is dominant over~$\FF_p$. The proof of this
last assertion is similar to the final step in the proof of
Theorem~\ref{theorem:RdmkFabmk}(c) earlier in this section.
\par
Finally, writing~$m=np$ with~$p$ prime and using the
decomposition $\gR_{d,m,k}'=\gR_{d,n,0}'\circ\gR_{d,p,k}'$, we are
reduced to showing that~$\gR_{d,p,k}'$ is dominant over~$\FF_p$.  But
we know from Lemma~\ref{lemma:Aprimekge0}(b) that $\det
A_{d,m,k}'(\bfb)$ includes a monomial whose coefficient is~$\pm1$,
while the matrix~$D_{d,m}(\bfb)$ does not depend on~$k$, so the
argument used at the end of the proof of
Theorem~\ref{theorem:RdmkFabmk}(c) earlier in this section can be
used, \emph{mutatis mutandis}, to complete the proof of
Theorem~\ref{theorem:RdmkFabmk}(d).

\begin{remark}
Let $A_{d,m,k}'(\bfb)$ be the $d$-by-$d$ matrix described
in~\eqref{eqn:JabA0CDprime}, i.e., the matrix obtained by deleting
the first column and row of~$A_{d,m,k}(\bfb)$. For $k=0$, we have
\[
  \det A_{d,m,0}(\bfb) = (-b_0)^{m-1}\det A_{d,m,0}'(\bfb),
\]
since only the first entry of the first column of $A_{d,m,0}(\bfb)$
is nonzero. It appears from examples that the following formula is true:
\begin{equation}
  \label{eqn:detAdmkdetAdm0}
  b_d^k \det A_{d,m,k}'(\bfb) \stackrel{?}{=} 
    (-1)^{(d+1)k} b_0^k \det A_{d,m,0}'(\bfb).
\end{equation}

We also remark that an easy calculation shows that if $1\le m\le d+1$,
then the top $d$ rows of $A_{d,m,k+1}$ are equal to the bottom $d$
rows of $A_{d,m,k}$, while the last row of $A_{d,m,k+1}$ is equal to the
$(d+1-m)$'th row of~$A_{d,m,k}$ shifted one place to the right. But
this relation between~$A_{d,m,k}$ and~$A_{d,m,k+1}$ is not sufficient
to explain~\eqref{eqn:detAdmkdetAdm0}.
\end{remark}

\section{The map on $\PP^d$ induced by $H_{\bfb,m}$}
\label{section:mapinducedbyHbm}
In this section we discuss the map on~$\PP^d$ induced by using only
the denominator of the Landen transform. We write the
function~$H_{\bfb,m}(z)$ defined by the formula~\eqref{eqn:Hbmwm} in
Proposition~\ref{prop:resultantGH} as
\[
  H_{\bfb,m}(z) = \sum_{i=0}^d H_{m,i}(\bfb)z^{d-i},
\]
and we use the $z$-coefficients of~$H_{\bfb,m}(z)$ to define a map
\begin{equation*}
  h_m:\PP^d\longrightarrow\PP^d,\quad
  h_m(\bfb)=\bigl[H_{m,0}(\bfb),\ldots,H_{m,d}(\bfb)\bigr].
\end{equation*}
Thus~$h_m$ is the map formed using the final~$d+1$ coordinate
functions of the rational map~$\gR_{d,m,k}$ described in
Theorem~\ref{theorem:RdmkFabmk}(a). We start with an easy fact.

\begin{proposition}
The map~$h_m$ is a morphism.
\end{proposition}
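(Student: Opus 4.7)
The plan is to verify that the $d+1$ coordinate polynomials $H_{m,0}(\bfb), \ldots, H_{m,d}(\bfb)$ defining $h_m$ have no common zero on $\PP^d$, which is the standard criterion for a tuple of homogeneous polynomials to define a morphism out of projective space. Equivalently, I will show that their common vanishing locus in $\mathbb{A}^{d+1}$ is exactly $\{\bfb = \bfzero\}$.

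First I would suppose $\bfb \in \mathbb{A}^{d+1}(\Kbar)$ satisfies $H_{m,i}(\bfb) = 0$ for all $0 \le i \le d$. This is equivalent to the statement that the polynomial $H_{\bfb,m}(z) \in \Kbar[z]$ vanishes identically. Next I would invoke the defining formula~\eqref{eqn:Hbmwm}, namely
\[
  H_{\bfb,m}(w^m) = \prod_{t=0}^{m-1} F_\bfb(\z_m^t w),
\]
which forces the product on the right-hand side to be zero as a polynomial in $w$. Since $\Kbar[w]$ is an integral domain, some factor $F_\bfb(\z_m^t w)$ must itself be the zero polynomial, and since multiplication by $\z_m^t$ is a change of variables that preserves nonvanishing of polynomials, this gives $F_\bfb \equiv 0$, i.e., $\bfb = \bfzero$.

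The argument above is essentially the same reasoning already used in the proof of Theorem~\ref{theorem:RdmkFabmk}(b) to identify the indeterminacy locus of $\gR_{d,m,k}$. One can alternatively read the conclusion directly off of Proposition~\ref{prop:resultantGH}(e): the factorization
\[
  H_{\bfb,m}(z) = (-1)^{(m+1)d} b_0^m \prod_{i=1}^d (z - \b_i^m)
\]
shows that $H_{\bfb,m}(z)$ is the zero polynomial in $z$ only when $b_0 = 0$ and simultaneously the coefficients in $\bfb$ of every lower-degree part of $F_\bfb$ vanish (since otherwise $H_{\bfb,m}$ would be a nonzero polynomial of strictly lower degree in $z$, still with at least one nonzero coefficient).

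I do not expect any real obstacle here; the substantive content is entirely contained in the product formula~\eqref{eqn:Hbmwm}, and the remainder is a one-line use of the fact that a polynomial ring over a field is an integral domain. The only mild care needed is to stress that ``identically zero as a polynomial in $z$'' is the correct reading of $H_{m,i}(\bfb) = 0$ for all $i$, so that passing to the $w$-variable via $z = w^m$ gives a genuine vanishing in $\Kbar[w]$ that one may factor.
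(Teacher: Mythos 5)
Your main argument is correct and is exactly the paper's proof: the paper simply cites the equivalence chain~\eqref{eqn:ZRdmkcomp} from the proof of Theorem~\ref{theorem:RdmkFabmk}(b), which is precisely the product formula~\eqref{eqn:Hbmwm} plus the integral-domain step you spell out. (Your alternative reading via Proposition~\ref{prop:resultantGH}(e) is a bit circular, since that factorization presupposes $F_\bfb$ has degree $d$, but it is offered only as an aside and does not affect the validity of your primary argument.)
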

\begin{proof} 
The computation~\eqref{eqn:ZRdmkcomp} done during the course of
proving Theorem~\ref{theorem:RdmkFabmk}(b) shows that
\[
  H_{m,0}(\bfb)=\cdots=H_{m,d}(\bfb)=0
  \;\Longleftrightarrow\;
  \bfb=\bfzero,
\]
so $\Zcal(h_m)=\emptyset$.
\end{proof}

More interesting is the fact that~$h_m$ is closely related to
the~$m$'th-power map. In order to describe the exact relationship, we
dehomogenize by setting $b_0=1$. Since
$H_{m,0}(\bfb)=(-1)^{(m+1)d}b_0^d$, this has the effect of
restricting~$h_m$ to an affine morphism (which by abuse of notation we
also call~$h_m$) given by
\begin{align}
  \label{eqn:hmAAd}
  h_m&:\AA^d\longrightarrow\AA^d, \notag\\*
  h_m(\bfb) 
     &= \bigl((-1)^{(m+1)d}H_{m,1}(\bfb),\ldots,(-1)^{(m+1)d}H_{m,d}(\bfb)\bigr),
\end{align}
where again by abuse of notation, we now use affine coordinates
$\bfb=(b_1,\ldots,b_d)$.  We let
\[
  \pi_m:\AA^d\longrightarrow\AA^d,\quad
  \pi_m(x_1,\ldots,x_d)=(x_1^m,\ldots,x_d^m)
\]
be the $m$-power map, and we let
\[
  \s_d:\AA^d\to\AA^d,\quad
  \s_d(\bfx)=\bigl(-\s_d^{1}(\bfx),\s_d^{2}(\bfx),
                     \ldots,(-1)^d\s_d^{d}(\bfx)\bigr),
\]
be the map defined by the elementary symmetric functions, taken with
alternating sign.  So for example, when $d=3$ we have
\[
  \s_3(x_1,x_2,x_3)
  = \bigl( -x_1-x_2-x_3, x_1x_2+x_1x_3+x_2x_3, -x_1x_2x_3\bigr).
\]
\par
It is well known that~$\s_d$ induces an isomorphism that takes the
quotient of~$\AA^d$ by the action of the symmetric group~$\Scal_d$
on~$(x_1,\ldots,x_d)$ to~$\AA^d$.  We denote this isomorphism by
\[
  \bar\s_d : \AA^d/\Scal_d \xrightarrow{\;\;\sim\;\;} \AA^d.
\]

\begin{proposition}
\label{proposition:hmbarspimbarsinv}
With notation as described in this section, we have
\[
  h_m = \bar\s_d\circ\pi_m\circ\bar\s_d^{-1}
  \quad\text{as self-maps of $\AA^d$.}
\]
\end{proposition}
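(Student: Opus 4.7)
The proof is essentially a direct unwinding of the factorization formula for $H_{\bfb,m}$ already established in Proposition~\ref{prop:resultantGH}(e). The plan is as follows.

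First, I work in the affine chart $b_0=1$ used to define $h_m$ in \eqref{eqn:hmAAd}, so $F_\bfb(z)=\prod_{i=1}^d(z-\b_i)$ is monic with roots $\b_1,\ldots,\b_d$ lying in some integral extension. Then the map $\bar\s_d$ is exactly the map sending the unordered tuple $[\b_1,\ldots,\b_d]$ to the coefficient vector $(b_1,\ldots,b_d)$, because the expansion of $\prod(z-\b_i)=z^d+\sum_{i=1}^d(-1)^i\s^i_d(\bfbeta)\,z^{d-i}$ gives $b_i=(-1)^i\s^i_d(\bfbeta)$, which is precisely the $i$'th coordinate of $\bar\s_d(\bfbeta)$. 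Thus $\bar\s_d^{-1}(\bfb)$ is the $\Scal_d$-orbit of $(\b_1,\ldots,\b_d)$.

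Next I apply $\pi_m$ to this root vector, obtaining $(\b_1^m,\ldots,\b_d^m)$, and invoke Proposition~\ref{prop:resultantGH}(e), which gives the key factorization
\[
   H_{\bfb,m}(z)=(-1)^{(m+1)d}\prod_{i=1}^d(z-\b_i^m)
\]
in the chart $b_0=1$. Expanding this product in the same way, the coefficient of $z^{d-i}$ in $(-1)^{(m+1)d}H_{\bfb,m}(z)$ equals $(-1)^i\s^i_d(\b_1^m,\ldots,\b_d^m)$, which is the $i$'th coordinate of $\bar\s_d\bigl(\pi_m(\bfbeta)\bigr)$. Comparing with the definition of $h_m$, this yields
\[
   h_m(\bfb)=\bar\s_d\bigl(\pi_m(\bar\s_d^{-1}(\bfb))\bigr),
\]
which is the claimed identity on the open chart $\{b_0=1\}\cong\AA^d$.

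There is essentially no obstacle here; the content of the statement is already packaged in Proposition~\ref{prop:resultantGH}(e), and the only care required is bookkeeping of the signs $(-1)^{(m+1)d}$ and $(-1)^i$ to confirm that the sign convention in the definition of $\s_d$ (alternating signs on the elementary symmetric functions) matches the sign convention used in the definition of $h_m$ (multiplication of the $H_{m,i}$ by $(-1)^{(m+1)d}$). Finally, one notes that $\bar\s_d$ is a genuine isomorphism $\AA^d/\Scal_d\xrightarrow{\sim}\AA^d$ by the fundamental theorem on symmetric polynomials, so the displayed conjugation formula makes sense as an equation of morphisms of $\AA^d$, and the computation above verifies it pointwise on a Zariski dense set (namely where the $\b_i$ are distinct), hence everywhere.
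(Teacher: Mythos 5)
Your proof is correct and follows essentially the same route as the paper's: work in the chart $b_0=1$, use the factorization of $H_{\bfb,m}(z)$ in terms of the $m$'th powers of the roots of $F_\bfb$, and match coefficients against the alternating-sign convention of $\bar\s_d$. The only cosmetic difference is that you cite Proposition~\ref{prop:resultantGH}(e) for the factorization while the paper rederives the same computation inline.
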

\begin{proof} 
In order to relate~$h_m$ to~$\pi_m$, it is convenient to
let~$u_1,\ldots,u_d$ be the $z$-roots of~$F_\bfb(z)=0$ in some
integral closure of ${\ZZ[b_1,\ldots,b_d]}$. (Remember that we have
set $b_0=1$.) In other words,
\[
  b_i = (-1)^i \s_d^{i}(u_1,\ldots,u_d)
  \quad\text{and}\quad
  F_\bfb(z) = \prod_{s=1}^d (z-u_s).
\]
This allows us to compute
\begin{align*}
  \sum_{i=0}^d H_{m,i}(\bfb)w^{m(d-i)}
   &= H_{\bfb,m}(w^m) 
   =\prod_{t=0}^{m-1} F_\bfb(\z_m^t w)  
   = \prod_{t=0}^{m-1} \prod_{s=1}^d (\z_m^t w - u_s)  \\
   &=  \prod_{s=1}^d \prod_{t=0}^{m-1}  (\z_m^t w - u_s)  
   =  \prod_{s=1}^d  (-1)^{m+1} (w^m - u_s^m)  \\
   &=  (-1)^{d(m+1)}\sum_{i=0}^d (-1)^i  \s_d^{i}(u_1^m,\ldots,u_d^m) w^{m(d-i)}  \\
   &=  (-1)^{d(m+1)}\sum_{i=0}^d (-1)^i \s_d^{i}\circ\pi_m(u_1,\ldots,u_d) w^{m(d-i)}.
\end{align*}
Hence for $1\le i\le d$ we have
\[
  H_{m,i}(\bfb) = (-1)^{d(m+1)} (-1)^i \s_d^{i}\circ\pi_m(u_1,\ldots,u_d).
\]
Combining the~$H_{m,i}$ to form~$h_m$ as in~\eqref{eqn:hmAAd}, we
obtain
\[
  h_m(\bfb) = \bar\s_d\circ\pi_m(u_1,\ldots,u_d).
\]
However, by construction~$(u_1,\ldots,u_d)$ satisfies
\[
  \bar\s_d(u_1,\ldots,u_d) = \bfb,
\]
which gives the desired formula
\[
  h_m = \bar\s_d\circ\pi_m\circ\bar\s_d^{-1}.
\]
This completes the proof of
Proposition~\ref{proposition:hmbarspimbarsinv}.
\end{proof}

\begin{remark}
Although the equality in
Proposition~\ref{proposition:hmbarspimbarsinv} is only valid on the
affine set $\{b_0\ne0\}$, we note that on the excluded
hyperplane~$b_0=0$, the definition of~$h_m$ is given via a product of
polynomials of degree~$d-1$. This allows us to completely
describe~$h_m$ on~$\PP^d$ via a natural decomposition. 
More precisely, we decompose~$\PP^d$ as a disjoint union
\[
  \PP^d = \bigcup_{n=0 }^{d} \{ b_0=\cdots=b_{n-1}=0~\text{and}~b_n\ne0 \}
  = \bigcup_{n=0 }^{d} \AA^n.
\]
Then Proposition~\ref{proposition:hmbarspimbarsinv} applied to each
affine piece implies that the restriction of~$h_m:\PP^d\to\PP^d$
to~$\AA^n$ satisfies
\[
  \left.h_m\right|_{\AA^n} = \bar\s_n\circ\pi_m\circ\bar\s_n^{-1}.
\]
\end{remark}

\begin{remark}
In some situations it may be more convenient to view $\gR_{d,m,k}$
itself as a map of affine space by dehomogenizing with $b_0=1$. We
write $\gR_{d,m,k}'$ for the resulting polynomial map
\[
  \gR_{d,m,k}' : \AA_\ZZ^{2d+1} \longrightarrow \AA_\ZZ^{2d+1},
\]
where we identify
\[
  \AA_\ZZ^{2d+1} \xrightarrow{\;\sim\;}
   \PP_\ZZ^{2d+1} \setminus \{b_0=0\}.
\]
We observe that there are a number of vector subspaces
of~$\AA^{2d+1}_\ZZ$ that the morphism~$\gR_{d,m,k}'$ leaves invariant,
including for example
\begin{align*}
   U_i &= \{a_0=a_1=\cdots=a_i=0\} \quad\text{for $0\le i\le d$}, \\
   V_i &= \{a_d=a_{d-1}=\cdots=a_{d-i}=0\} \quad\text{for $0\le i\le d$}, \\
   W_i &= \{b_d=b_{d-1}=\cdots=b_{d-i}=0\} \quad\text{for $0\le i< d$}.
\end{align*}
(Note that for~$1\le k<m$, we have
$\gR_{d,m,k}'(\AA^{2d+1}_\ZZ)=U_0$.)  
\par
Writing~$\bfa$ as a column vector and letting~$A_{d,m,k}(\bfb)$ be the
matrix appearing in the Jacobian, see~\eqref{eqn:JabA0CD}, we see
that~$\gR_{d,m,k}'$ takes the form
\[
  \gR_{d,m,k}'(\bfa,\bfb) = \bigl(A_{d,m,k}(\bfb)\bfa,h_m(\bfb)\bigr).
\]
With this notation, the composition law for the Landon transform
described in Proposition~\ref{prop:compositionlaw} becomes the matrix
formula
\[
  A_{d,m,k}\bigl(h_m(\bfb)\bigr) A_{d,n,\ell}(\bfb)
  = A_{d,mn,kn+\ell}(\bfb).
\]
\end{remark}




\end{document}